%%%%%%%%%%%%%%%%%%%%%%% file template.tex %%%%%%%%%%%%%%%%%%%%%%%%%
%
% This is a general template file for the LaTeX package SVJour3
% for Springer journals.          Springer Heidelberg 2010/09/16
%
% Copy it to a new file with a new name and use it as the basis
% for your article. Delete % signs as needed.
%
% This template includes a few options for different layouts and
% content for various journals. Please consult a previous issue of
% your journal as needed.
%
%%%%%%%%%%%%%%%%%%%%%%%%%%%%%%%%%%%%%%%%%%%%%%%%%%%%%%%%%%%%%%%%%%%
%
% First comes an example EPS file -- just ignore it and
% proceed on the \documentclass line
% your LaTeX will extract the file if required
% [arxiv_v2: filecontents example.eps stripped, 188 chars]
\RequirePackage{fix-cm}
\documentclass[smallextended]{svjour3}       % onecolumn (second format)
\smartqed  % flush right qed marks, e.g. at end of proof
\usepackage{graphicx}
%
% \usepackage{mathptmx}      % use Times fonts if available on your TeX system
%
% insert here the call for the packages your document requires
%\usepackage{latexsym}
% etc.
%
% please place your own definitions here and don't use \def but
% \newcommand{}{}
%
% Insert the name of "your journal" with
% \journalname{myjournal}
%

\usepackage{amsfonts}
\usepackage{amsmath}
\usepackage{amssymb}
\usepackage{graphicx}
\usepackage{latexsym}
\usepackage{color}
\usepackage{mathrsfs}

\newcommand{\s}{\mathcal{S}}
\newcommand{\Li}{\mathcal{L}}

\begin{document}

\title{A geometric proof of the Periodic Averaging Theorem on Riemannian manifolds}

%\subtitle{Do you have a subtitle?\\ If so, write it here}

%\titlerunning{Short form of title}        % if too long for running head

\author{Misael Avenda\~{n}o Camacho   \and \\ Guillermo D\'avila Rasc\'on }

%\authorrunning{Misael Avenda\~{n}o \and Guillermo D\'avila} % if too long for running head

\institute{ Misael Avenda\~{n}o Camacho %\and Guillermo D\'avila Rasc\'on
            \at CONACYT Research Fellow-Departamento de Matem\'aticas, Universidad de Sonora \\
            Blvd L. Encinas y Rosales s/n Col. Centro, CP 83000, Hermosillo (Son), Mexico \\
              Tel.: +52-66-2592155 \\
              \email{misaelave@mat.uson.mx}
              \and
              Guillermo D\'avila Rasc\'on
              \at Departamento de Matem\'aticas, Universidad de Sonora \\
            Blvd L. Encinas y Rosales s/n Col. Centro, CP 83000, Hermosillo (Son), Mexico \\
              \email{davila@mat.uson.mx}           %  \\
%             \emph{Present address:} of F. Author  %  if needed
           }

\date{Received: date / Accepted: date}
% The correct dates will be entered by the editor

\maketitle

\begin{abstract}
We present a geometric proof of the averaging theorem for perturbed dynamical systems on a Riemannian manifold,
in the case where the flow of the unperturbed vector field is periodic and the $\mathbb{S}^{1}$-action associated to this vector
field is not necessarily trivial. We generalize the averaging procedure \cite{Arno-63,ArKN-88} defining a global averaging method
based on a free coordinate approach which allow us to formulate our results on any open domain with compact closure.
\keywords{Averaging method \and Perturbation theory \and Periodic flows \and Riemannian manifolds \and Horizontal lifts \and
$\mathbb{S}^1$-principal bundle}
%\PACS{ 37C10 \and 37C55 \and 37J40 \and 53C20 \and 55R10 \and 58D17 }
 \subclass{37C10 \and 37C55 \and 37J40 \and 53C20 \and 55R10 \and 58D17}
\end{abstract}

\section{Introduction}
\label{intro}
The well known averaging method \cite{Arno-63,ArKN-88,Mos-70,Ne-08,Reeb,verhulst3} is one of the most important methods in perturbation theory
and it is based on the idea of splitting the motion of a perturbed system into a slow evolution and rapid oscillations.

Geometrically, the averaging method arises in the context of perturbations of vertical vector fields on a fibered
manifold. We consider a smooth fiber bundle $\pi : M \rightarrow B$ and a smooth, perturbed vector
field $A_{\varepsilon} = A_{0} + \varepsilon A_{1}$ on $M$, where $A_{0}$ is tangent to every fiber. In this situation,
each integral curve of $A_{0}$ is projected by $\pi$ onto a point on the base $B$ and each integral curve of $A_{\varepsilon}$ is
projected into a curve on $B$, whose tangent vector field is of order $\varepsilon$ but, in general, that projected curve
is not the integral curve of any vector field on the base $B$. Therefore, a noticeable displacement of the projected curve takes place
over time of order $1/\varepsilon$.  This situation rises the following question:
Is it possible to describe the motion of the projected curve  on the base, for a long period of time? The
averaging method allow us to describe the motion of this projected curve by means of the integral curve of a certain
vector field on the base $B$.

In many applications of the averaging method, the fiber bundles have the following properties: (i) every point of the base has a neighborhood
where the fiber bundle is a direct product, and (ii) the fibers are $n$-dimensional tori. However, the only case
completely studied is when the fibers are one dimensional tori (circles), the so-called one frequency systems, \cite{ArKN-88,verhulst3}.

Consider the product manifold $\mathbb{S}^{1} \times \mathbb{R}^{n}$ together with the coordinate system  $(\varphi \, (\mathrm{mod} \, 2\pi), I)$. The one frequency system is the perturbed dynamical system on $\mathbb{S}^{1} \times \mathbb{R}^{n}$ given by
\begin{align}
\dot{\varphi}& = \omega(I) + \varepsilon f(\varphi, I),  \label{pert1} \\
\dot{I}& = \varepsilon \, g(\varphi, I),    \label{pert2}
\end{align}
for $0 \leq \varepsilon \ll 1$, where $f = f(\varphi, I)$ and $g=g(\varphi, I)$ are smooth $2\pi$-periodic
in $\varphi$, and $\omega: \mathbb{R}^n \rightarrow \mathbb{R}$  is called \emph{the frequency function}. Notice that for
$\varepsilon =0$,  that is, with no perturbation, the system (\ref{pert1}), (\ref{pert2}) has periodic solutions with
frequency $\omega$. If we consider the canonical projection $\pi: \mathbb{S}^{1} \times \mathbb{R}^{n}\rightarrow \mathbb{R}^{n} $ over the second factor, we have a trivial fiber bundle with $\mathbb{S}^1$ as typical fibers and $\mathbb{R}^{n}$ as the base. Therefore, for $\varepsilon = 0$, the unperturbed system of (\ref{pert1}), (\ref{pert2}) defines a vector field which is
tangent to the fibers. In order to approximate the projection of the trajectories of the one frequency system (\ref{pert1}), (\ref{pert2}) over the base $\mathbb{R}^{n}$, the averaging procedure suggests to replace the \emph{slow part} (\ref{pert2}) by
the \emph{averaged system}

\begin{equation}
\dot{J}  = \varepsilon \, G(J),   \label{avertraj}
\end{equation}
where
\[
G(J) = \frac{1}{2\pi}\int^{2\pi}_0  g(\varphi,J) \mathrm{d} \varphi,
\]
and then compare the trajectories of (\ref{pert2}) and (\ref{avertraj}) at the same initial condition. Remark that equation (\ref{avertraj}) has the advantage that does not depend on the coordinate $\varphi$.

The classical averaging theorem asserts that if the frequency function satisfies the non degeneracy condition:
$\omega(I) > c^{-1} >0 $ for a certain constant $c$, then the solution $J(t)$ of the averaged system (\ref{avertraj}) remains close enough to
the solution $I(t)$ of slow part (\ref{pert2}), with $I(0)=J(0)$, for $\varepsilon$ small enough on the long time scale
$t \sim 1/\varepsilon$, that is, there exist constants $c_1>0$ and $\varepsilon_0 >0$ such that
\begin{equation*}
\|I(t)-J(t)  \| < c_1\varepsilon \ \ \ \text{ if } \ \ \ I(0)=J(0) \ \ \ \text{and}\ \ \ 0 \leq t \leq \frac{1}{\varepsilon},
\end{equation*}
for all $\varepsilon < \varepsilon_0$, \cite{Arno-63,ArKN-88,Ne-08,verhulst3}.

The purpose of this paper is to generalize the classical averaging theorem in the following
setting: instead of the frequency system (\ref{pert1}), (\ref{pert2}) on $\mathbb{S}^{1} \times \mathbb{R}^{n}$, we consider
a smooth, perturbed vector field $X_\varepsilon = X_0 + \varepsilon X_1$ on an arbitrary manifold $M$, where
$X_0$ is a vector field with periodic flow, and hence, the vector field $X_0$ induces an $\mathbb{S}^1$-action on $M$.
Now, assume that this action is free and let $\mathcal{O} = M/\mathbb{S}^1$ be the orbit space. Thus, $\mathcal{O}$ is a smooth manifold and
the natural projection $\rho: M \rightarrow \mathcal{O}$ is a surjective submersion and hence, we have a fiber bundle $(M, \rho, \mathcal{O})$
having $\mathbb{S}^1$ as typical fiber. Therefore, we are interested in finding some estimates for the projections of the
trajectories of the perturbed vector field $X_\varepsilon$ over $\mathcal{O}$ with respect to the trajectories of a
suitable vector filed on the base.

If we assume that the fiber bundle $\rho: M \rightarrow \mathcal{O}$ is locally trivial, there exists a local
coordinate system where the perturbed vector field  $X_\varepsilon$ takes the form (\ref{pert1}), (\ref{pert2}). We can try to apply the classical averaging theorem in this setting. However, this approach has a major drawback: it could happen that the
integral curves of the perturbed vector field should not be completely contained in the local coordinate system or, else, it could happen that they pass through it only for a short period of time. Since the averaging theorem applies only on
the coordinate neighborhood where the perturbed vector field  $X_\varepsilon$ takes the form (\ref{pert1}), (\ref{pert2}), we do not know what occurs outside of this  neighborhood. Therefore, we are not able to obtain an approximation of the projected trajectories of $X_\varepsilon$ for a long period of time. This drawback does not occur if the coordinates are well defined on the whole manifold $M$, but the existence of global action-angle coordinates is a very restrictive situation
\cite{bates-92,Duist-80,Ne-08,Nekh88}.

In this paper, we prove the averaging theorem on arbitrary manifolds without the assumption of the existence of special coordinate systems
like action-angle variables. Actually, we follow a coordinate free approach. We study perturbations of vector
fields with periodic flow on arbitrary open domains of a Riemannian manifold $M$ when the $\mathbb{S}^{1}$-action associated to the unperturbed
vector field is not necessarily trivial. Here, we define a global averaging method using the properties of periodic flows which allow us to
formulate our results in a global setting.

The proof of the classical averaging theorem \cite{ArKN-88,verhulst3} is based on the following arguments: a near identity transformation whose infinitesimal generator is a solution of a homological type equation, the triangle inequality and some technical estimations (for example,
Gronwall type estimation are presented in \cite{verhulst3}). In our setting, the proof of the theorem also follows from the same arguments;
however, we face with some difficulties which are not present in the classical formulation of the averaging theorem. The main of these
difficulties is to get an inequality of Gronwall's type which help us find an estimation between the distance of the perturbed trajectory
and the averaged trajectory. In the classical setting, this estimation is obtained due to the existence of global minimizing geodesic on
$\mathbb{S}^{1} \times \mathbb{R}^n$. However, this property does not hold in general. To address this problem we use a geometric construction.
The idea here is to construct a parameterized surface $\gamma: [0,1] \times [0,L] \rightarrow M$ given by
$\gamma(s,t):=\mathrm{Fl}^t_{X_S}(\beta(s))$ where $\beta: [0,1] \rightarrow M $ is a fixed curve and $\mathrm{Fl}^t_{X_S}$ is the flow of a
parameterized vector field $X_s$. Then, for every $t \in [0,L]$ we estimate the distance from $\gamma(0,t)$ and $\gamma(1,t)$ by
using Gronwall's lemma \cite{kunzi,verhulst3} and assuming that the manifold $M$ possesses a
suitable Riemannian metric, so we can use such tools like covariant derivatives and horizontal lifts.

The paper is organized as follows. In Sec. 2, we introduce the averaging and integrating operator associated to a vector field with periodic flow.
In Sec. 3, we state our hypotheses and main result. Then, in Sec. 4 we show how to construct a near identity transformation putting the perturbed
vector field $X_\varepsilon = X_0 + \varepsilon X_1$ into its $\mathbb{S}^1$-invariant normal form relative to the $\mathbb{S}^1$-action induced
by $X_0$. To achieve this goal, a kind of homological equation must be solved. In Sec.~5, we state a Gronwall's type
inequality on Riemannian manifolds. In Sec. 6, we define an $\mathbb{S}^1$-invariant horizontal distribution on $TM$ using the fact that in the
$\mathbb{S}^1$ bundle $(\mathbb{S}^1, \rho, \mathcal{O} )$  the map $\rho $ is Riemannian submersion. We also show the basic properties of the
horizontal lifts of curves and vector fields. Sec.~7 is devoted to the proof of the main theorem.
Finally, in Sec.~8 we  make use of the averaging theorem (Theorem \ref{aver_theo}) in order to
construct adiabatic invariants for perturbed vector fields.

\section{Averaging operators associated to periodic flows}
\label{sec:2}

Let $M$ be a smooth manifold and let $X_0$ be a complete vector field on $M$ with periodic flow
$\mathrm{Fl}_{X_0}^{t}$ and frequency function $\omega: M\rightarrow\mathbb{R}$, $\omega >0$ , that is, for
any $p \in M$
\begin{equation} \label{flow:X}
\mathrm{Fl}_{X_0}^{t+T(p)}(p)= \mathrm{Fl}_{X_0}^{t}(p),  \qquad \forall \; t \in \mathbb{R},
\end{equation}
where $T(p) := {2\pi}/{\omega(p)}$ is the period of the orbit of $X_0$ passing through $p$. The vector field
$X_0$ induces an $\mathbb{S}^{1}$-action on $M$ given by $(t,p)\rightarrow  \mathrm{Fl}_{X_0}^{\frac{1}{\omega(p)}t}(p)$, with coordinate
$t\ \mathrm{mod}\, 2\pi$. We denote the infinitesimal generator of the $\mathbb{S}^{1}$-action by $\mathrm{\Upsilon}$, which can be computed in
terms of the vector field $X_0$, and is given by
\begin{equation}  \label{infgen}%
\mathrm{\Upsilon} = \frac{1}{\omega}X_0.
\end{equation}

Now, in order to settle the main result of the paper, let us recall some useful facts. A tensor field $R \in \Gamma T_{r}^{s}(M)$
is said to be $\mathbb{S}^{1}$-\emph{invariant} if and only if $\left(\mathrm{Fl}_{\mathrm{\Upsilon}}^{t}\right)^{\ast} R = R $. Equivalently,
$\Li_\mathrm{\Upsilon} R = 0$, where $\mathcal{L}$ denotes the usual Lie derivative. In most cases, we will be using this definition for smooth
functions (tensor fields in  $\Gamma T_{0}^{0}(M)$) and smooth vector fields  (tensor fields in  $\Gamma T_{0}^{1}(M)$).

For any tensor field $R \in \Gamma T_{r}^{s}(M)$, we associate, with the $\mathbb{S}^{1}$-action on $M$, defined by
$X_0$, the following operators acting on $T_{r}^{s}(M)$:
\begin{enumerate}
\item The \emph{averaging operator} which is the tensor field defined by
\begin{equation}   \label{avgoper}%
\langle R \rangle := \frac{1}{2\pi}\int^{2\pi}_0 (\mathrm{Fl}_{\mathrm{\Upsilon}}^t)^{\ast} R.
\end{equation}
Notice that $\langle R \rangle$ is a tensor field of the same type as $R$.\\
\item The \emph{integrating operator}, which is the tensor field defined by
\begin{equation} \label{intoper}%
\mathcal{S}(R) := \frac{1}{2\pi} \int_{0}^{2\pi} (t -\pi)(\mathrm{Fl}_{\mathrm{\Upsilon}}^{t})^{\ast} R.
\end{equation}
It is clear that $\mathcal{S}(R)$ is a tensor field of the same type as $R$.
\end{enumerate}

For every $R \in \Gamma T_{r}^{s} (M)$, the operators defined in (\ref{avgoper}) and (\ref{intoper})  have
the following properties \cite{MVor-11,Mos-70}:
\begin{enumerate}
\item $\Li_{\mathrm{\Upsilon}}\langle R \rangle =0$.
\item $R$ is $\mathbb{S}^{1}$-invariant  if and only if $\langle R \rangle = R$.
\item $\langle \Li_\mathrm{\Upsilon} R \rangle=\Li_{\mathrm{\Upsilon}}\langle R \rangle$.
\item For an   $\mathbb{S}^{1}$-invariant function $g \in C^\infty(M)$ we have
$\langle gR \rangle = g\langle R \rangle$ and $\mathcal{S}(gR) = g\mathcal{S}(R)$.
\item $\mathcal{S}(\langle R \rangle) = \langle \mathcal{S}(R) \rangle = 0$
\item $\Li_\mathrm{\Upsilon} \circ \mathcal{S} (R) = R - \langle R \rangle$.
\end{enumerate}

A key property relating the averaging operator and the integrating operator is given by the following result.
\begin{proposition}\label{homol_prop}
For every $R \in \mathfrak{X}(M)$, the vector field
$Z=\frac{1}{\omega}\mathcal{S}(R) +\frac{1}{\omega^{3}}\mathcal{S}^2(\Li_R \omega)X_0 $ satisfies the homological type equation
\begin{equation}
\Li_{X_0}  Z = R - \langle R \rangle.
\end{equation}
\end{proposition}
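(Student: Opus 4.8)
The plan is to exploit two structural facts that drive everything. First, the frequency $\omega$ is constant along the orbits of $X_0$ (all points of a closed orbit share the same period $T = 2\pi/\omega$), so that $\Li_{X_0}\omega = 0$ and consequently $\Li_\Upsilon\omega = 0$. Second, I will lean on the operator identities from the list above, chiefly property (6), $\Li_\Upsilon\circ\mathcal{S} = \mathrm{id} - \langle\,\cdot\,\rangle$, and property (5), $\langle\mathcal{S}(\,\cdot\,)\rangle = 0$. Writing $X_0 = \omega\Upsilon$ and splitting $Z = Z_1 + Z_2$ with $Z_1 = \tfrac{1}{\omega}\mathcal{S}(R)$ and $Z_2 = \tfrac{1}{\omega^{3}}\mathcal{S}^2(\Li_R\omega)\,X_0$, I would compute $\Li_{X_0}Z_1$ and $\Li_{X_0}Z_2$ separately and show that their sum collapses to $R - \langle R\rangle$.

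For the first term, I use the Leibniz rule $\Li_{X_0}(fY) = (\Li_{X_0}f)Y + f\,\Li_{X_0}Y$ with $f = 1/\omega$; since $\Li_{X_0}\omega = 0$ the first summand vanishes, leaving $\Li_{X_0}Z_1 = \tfrac{1}{\omega}\Li_{X_0}\mathcal{S}(R)$. I then convert $\Li_{X_0}$ into $\Li_\Upsilon$ via $X_0 = \omega\Upsilon$ and the bracket formula $\Li_{\omega\Upsilon}Y = \omega\,\Li_\Upsilon Y - (\Li_Y\omega)\Upsilon$, and apply (6) to rewrite $\Li_\Upsilon\mathcal{S}(R) = R - \langle R\rangle$. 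Using $\tfrac{1}{\omega}\Upsilon = \tfrac{1}{\omega^{2}}X_0$ this yields
\[
\Li_{X_0}Z_1 = \bigl(R - \langle R\rangle\bigr) - \frac{1}{\omega^{2}}\bigl(\Li_{\mathcal{S}(R)}\omega\bigr)X_0 .
\]
The nontensorial correction $-(\Li_Y\omega)\Upsilon$ produced by the bracket formula is precisely the obstruction that the second term of $Z$ is designed to absorb, and recognizing this is the conceptual heart of the argument.

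For the second term, write $Z_2 = h\,X_0$ with $h = \tfrac{1}{\omega^{3}}\mathcal{S}^2(\Li_R\omega)$, so that $\Li_{X_0}Z_2 = (\Li_{X_0}h)\,X_0$. Again $\Li_{X_0}\omega = 0$ annihilates the derivative of $1/\omega^{3}$, and using $\Li_{X_0} = \omega\,\Li_\Upsilon$ on functions together with two applications of (6) and one of (5) (to discharge $\langle\mathcal{S}(\Li_R\omega)\rangle = 0$, which reduces $\Li_\Upsilon\mathcal{S}^2(\Li_R\omega)$ to $\mathcal{S}(\Li_R\omega)$) gives $\Li_{X_0}h = \tfrac{1}{\omega^{2}}\mathcal{S}(\Li_R\omega)$, whence $\Li_{X_0}Z_2 = \tfrac{1}{\omega^{2}}\mathcal{S}(\Li_R\omega)\,X_0$.

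It then remains to cancel the two correction terms, i.e. to establish the commutation identity $\mathcal{S}(\Li_R\omega) = \Li_{\mathcal{S}(R)}\omega$, which I expect to be the main technical obstacle. The plan is to write $\Li_{\mathcal{S}(R)}\omega = \mathrm{d}\omega(\mathcal{S}(R))$, pull the contraction with $\mathrm{d}\omega$ inside the integral defining $\mathcal{S}$, and invoke naturality of the pullback, $(\mathrm{Fl}_\Upsilon^{t})^{\ast}\bigl(\mathrm{d}\omega(R)\bigr) = \bigl((\mathrm{Fl}_\Upsilon^{t})^{\ast}\mathrm{d}\omega\bigr)\bigl((\mathrm{Fl}_\Upsilon^{t})^{\ast}R\bigr)$. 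Since $\Li_\Upsilon\omega = 0$ implies $\Li_\Upsilon\mathrm{d}\omega = \mathrm{d}(\Li_\Upsilon\omega) = 0$, the one-form $\mathrm{d}\omega$ is $\mathbb{S}^{1}$-invariant, so $(\mathrm{Fl}_\Upsilon^{t})^{\ast}\mathrm{d}\omega = \mathrm{d}\omega$ and the contraction passes through the averaging integral, giving $\mathrm{d}\omega(\mathcal{S}(R)) = \mathcal{S}(\mathrm{d}\omega(R)) = \mathcal{S}(\Li_R\omega)$. With this identity the two correction terms are equal and opposite, so $\Li_{X_0}Z = \Li_{X_0}Z_1 + \Li_{X_0}Z_2 = R - \langle R\rangle$, which is the claim.
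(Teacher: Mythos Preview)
Your proof is correct and follows precisely the approach the paper indicates: the paper itself does not spell out the computation but simply states that the result ``follows from properties 1--6'' and cites \cite{MVor-11}, and your argument is a careful unpacking of exactly that, using property (6) twice, property (5) once, the $\mathbb{S}^{1}$-invariance of $\omega$, and the bracket identity $[\omega\Upsilon,Y]=\omega\,\Li_\Upsilon Y-(\Li_Y\omega)\Upsilon$. The commutation $\mathcal{S}(\Li_R\omega)=\Li_{\mathcal{S}(R)}\omega$ via the $\mathbb{S}^{1}$-invariance of $\mathrm{d}\omega$ is the right way to close the argument.
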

The proof of this proposition follows from properties 1-6, above (see \cite{MVor-11}).

\section{Main result: The periodic averaging theorem}
\label{sec:3}

Let $ M$ be a connected manifold and let $X_0$ be a vector field on $M$. We assume that the vector field $X_0$ satisfies the following symmetry hypothesis:
\begin{enumerate}
\item[(SH)] $X_{0}$ is a vector field on $M$ with periodic flow and the action of the circle
$\mathbb{S}^{1} =\mathbb{R\diagup} 2\pi\mathbb{Z}$ on $M$ associated with the infinitesimal generator $\mathrm{\Upsilon} =\frac{1}{\omega}X_0$
is \textit{free}.

% whose associated infinitesimal generator is given by (\ref{infgen}).
\end{enumerate}
%Consider the action of the circle $\mathbb{S}^{1} =\mathbb{R\diagup} 2\pi\mathbb{Z}$ on $M$, associated with the infinitesimal generator $\mathrm{\Upsilon} =\frac{1}{\omega}X_0$, and assume that this action is \textit{free}.
%It follows from this assumption that each trajectory of $\mathrm{\Upsilon}$ is minimally $2\pi$-periodic.
Let $\mathcal{O}= M\mathbb{\diagup S}^{1}$ be the orbit space of the $\mathbb{S}^{1}$-action and denote by $\rho : M\rightarrow\mathcal{O}$ the
natural  projection. It follows from well-known properties of free actions of compact Lie groups \cite{marsden2,michor} that there exists a
unique manifold structure on $\mathcal{O}$  such that $\rho$ is a smooth surjective submersion (a fiber bundle). Moreover, $\rho$ is a principal
$\mathbb{S}^{1}$-bundle over $\mathcal{O}$. For each $\mathbb{S}^{1}$-invariant vector field $Y$ on $M$ there exists a unique vector field
$Y_{\mathcal{O}}$ on the orbit space $\mathcal{O}$ which is $\rho$-related with $Y$, that is,
\[
\rho_* Y =Y_{\mathcal{O}} .
\]
It is clear that $\mathrm{\Upsilon}_{\mathcal{O}}\equiv 0$.

Now, let us choose an $\mathbb{S}^{1}$-invariant metric $g$ on $M$. Such a Riemannian metric always
exists and can be obtained from an arbitrary Riemannian metric on $M$ by applying the averaging procedure, \cite{MVor-11}.
Since the $\mathbb{S}^{1}$-action is free and proper, there exists a unique Riemannian metric on $\mathcal{O}$,
denoted by $g^{\mathcal{O}}$, such that the projection $\rho: M\rightarrow \mathcal{O}$ is a
Riemannian submersion \cite{gallot,michor}.  We also denote by
$\mathrm{dist}^{\mathcal{O}}:\mathcal{O}\times\mathcal{O}\rightarrow\mathbb{R}$ the corresponding distance function.

In order to formulate the main result of this paper, we start with a perturbed vector field on $M$, which is close
to the smooth vector field $X_0$ in the following sense:
\begin{equation} \label{pertvfX} %
X_{\varepsilon} = X_{0} +\varepsilon X_{1}, \qquad \varepsilon \ll 1.
\end{equation}
As usual,   $X_{1}$ is also a smooth vector fields on $M$, known as the
\emph{perturbed} part of $X_{\varepsilon}$.
The averaging theorem establishes that the projection over the orbit space $\mathcal{O}$ of a trajectory of
$X_{\varepsilon}$ (\ref{pertvfX}) can be approximated by a  trajectory of the vector field
$\varepsilon \langle X_1\rangle_\mathcal{O}$, where $\langle X_1\rangle_\mathcal{O}=\rho_*\langle X_1\rangle $.

\begin{theorem}[Periodic Averaging Theorem on Manifolds] \label{aver_theo}
Let $ M$ be a connected manifold. Assume that the vector field $X_0$ satisfies the symmetry hypothesis {\upshape (SH)} above. Fix $m^{0}\in M$
and suppose that there exists a constant $L>0$ such that the trajectory of $\langle X_{1}\rangle_{\mathcal{O}} \in \mathfrak{X}(\mathcal{O})$ through
$z^{0}=\rho(m^{0})\in\mathcal{O}$ is defined for all $t\in [0,L]$ and remains in an open domain $\mathcal{D}_{0}$
with compact closure. Then, there exist constants $\varepsilon_{0}>0$, $L \geq L_{0}>0$ and $c>0$ such that
\begin{equation}  \label{CCC}
\mathrm{dist}^{\mathcal{O}} \Bigl( \rho\circ\mathrm{Fl}_{\mathbf{X}_{\varepsilon}}^{t} (m^{0}),
\mathrm{Fl}_{\langle X_{1} \rangle_{\mathcal{O}}}^{\varepsilon t}(z^{0} )  \Bigr) \leq c \, \varepsilon,
\end{equation}
for all $\varepsilon\in (0,\varepsilon_{0}]$ and $t\in [0,{L_{0}}/{\varepsilon}]$.
\end{theorem}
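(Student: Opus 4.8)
The plan is to reduce (\ref{CCC}) to three $O(\varepsilon)$ estimates by first conjugating $X_\varepsilon$ into an $\mathbb{S}^1$-invariant normal form and then comparing the \emph{projected} flows on $\mathcal{O}$ through a one-parameter family of trajectories. First I would invoke Proposition~\ref{homol_prop} to produce $Z\in\mathfrak{X}(M)$ solving the homological equation $\mathcal{L}_{X_0}Z=X_1-\langle X_1\rangle$, and set $\Phi_\varepsilon:=\mathrm{Fl}^{\varepsilon}_{Z}$, a near-identity diffeomorphism with $\mathrm{dist}^{M}(p,\Phi_\varepsilon(p))=O(\varepsilon)$ uniformly on compact sets. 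Taylor-expanding the pullback and using $[Z,X_0]=-(X_1-\langle X_1\rangle)$ gives
\[
(\Phi_\varepsilon)^{*}X_\varepsilon=X_0+\varepsilon X_1+\varepsilon[Z,X_0]+O(\varepsilon^{2})=X_0+\varepsilon\langle X_1\rangle+\varepsilon^{2}R_\varepsilon,
\]
where $R_\varepsilon$ is a smooth family of vector fields, bounded uniformly on compact sets for $\varepsilon\in(0,\varepsilon_0]$. Writing $\hat X_\varepsilon:=(\Phi_\varepsilon)^{*}X_\varepsilon$ and $\hat Y_\varepsilon:=X_0+\varepsilon\langle X_1\rangle$, the decisive feature is that $\hat Y_\varepsilon$ is $\mathbb{S}^1$-invariant (property~1 of $\langle\cdot\rangle$ together with invariance of $X_0$), while $X_0$ is vertical; hence $\rho_*\hat Y_\varepsilon=\varepsilon\langle X_1\rangle_{\mathcal{O}}$ and therefore $\rho(\mathrm{Fl}^{t}_{\hat Y_\varepsilon}(p))=\mathrm{Fl}^{\varepsilon t}_{\langle X_1\rangle_{\mathcal{O}}}(\rho(p))$.

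Using the conjugacy $\mathrm{Fl}^{t}_{X_\varepsilon}=\Phi_\varepsilon\circ\mathrm{Fl}^{t}_{\hat X_\varepsilon}\circ\Phi_\varepsilon^{-1}$ and setting $\hat m^{0}:=\Phi_\varepsilon^{-1}(m^{0})$, I would split the left side of (\ref{CCC}) by the triangle inequality for $\mathrm{dist}^{\mathcal{O}}$ into three terms: (a) the distance between $\rho(\mathrm{Fl}^{t}_{X_\varepsilon}(m^{0}))=\rho(\Phi_\varepsilon(q_t))$ and $\rho(q_t)$, where $q_t:=\mathrm{Fl}^{t}_{\hat X_\varepsilon}(\hat m^{0})$, which is $O(\varepsilon)$ because $\rho$ is a Riemannian submersion, hence distance nonincreasing, and $\Phi_\varepsilon$ displaces points by $O(\varepsilon)$; (b) the distance between $\rho(\mathrm{Fl}^{t}_{\hat X_\varepsilon}(\hat m^{0}))$ and $\rho(\mathrm{Fl}^{t}_{\hat Y_\varepsilon}(\hat m^{0}))$, the genuine averaging error; and (c) the distance between $\mathrm{Fl}^{\varepsilon t}_{\langle X_1\rangle_{\mathcal{O}}}(\rho(\hat m^{0}))$ and $\mathrm{Fl}^{\varepsilon t}_{\langle X_1\rangle_{\mathcal{O}}}(z^{0})$, which is $O(\varepsilon)$ since $\rho(\hat m^{0})$ is $O(\varepsilon)$-close to $z^{0}$ and the time-$\tau$ map of $\langle X_1\rangle_{\mathcal{O}}$, with $\tau=\varepsilon t\in[0,L_0]$, is Lipschitz on the compact domain. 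Everything hinges on term~(b).

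For (b) I would carry out the geometric construction advertised in the introduction. Interpolate the two fields by $X_s:=\hat Y_\varepsilon+s\,\varepsilon^{2}R_\varepsilon$, $s\in[0,1]$, and form the surface $\gamma(s,t):=\mathrm{Fl}^{t}_{X_s}(\hat m^{0})$, so that $\rho(\gamma(0,t))$ is the truncated-normal-form trajectory and $\rho(\gamma(1,t))$ the exact one. Since $\rho$ is distance nonincreasing, $\mathrm{dist}^{\mathcal{O}}(\rho(\gamma(0,t)),\rho(\gamma(1,t)))\le\int_{0}^{1}|\rho_*\partial_s\gamma(s,t)|\,\mathrm{d}s$. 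Writing $c(s,\cdot):=\rho\circ\gamma(s,\cdot)$ and $W:=\partial_s c=\rho_*\partial_s\gamma$, differentiating $\partial_t\gamma=X_s(\gamma)$ and applying $\rho_*$ gives $\partial_t c=\varepsilon\langle X_1\rangle_{\mathcal{O}}(c)+s\,\varepsilon^{2}\rho_*(R_\varepsilon(\gamma))$, because $\rho_*X_0=0$ and $\langle X_1\rangle$ is invariant. Differentiating once more in $s$ and passing to the Levi-Civita connection of $(\mathcal{O},g^{\mathcal{O}})$ yields a variational equation $\tfrac{D}{\partial t}W=\varepsilon\,\nabla^{\mathcal{O}}_{W}\langle X_1\rangle_{\mathcal{O}}+\varepsilon^{2}(\cdots)$ whose inhomogeneous term is $O(\varepsilon^{2})$ and whose homogeneous coefficient is $O(\varepsilon)$, uniformly on the compact domain. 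The crucial point is that projection annihilates the fast field $X_0$, so the Gronwall coefficient is $O(\varepsilon)$; the Riemannian Gronwall inequality of Sec.~5 then produces a factor $e^{O(\varepsilon)t}=e^{O(1)}$ over $t\in[0,L_0/\varepsilon]$, whence $|W|=O(\varepsilon)$ and $\int_0^1|W|\,\mathrm{d}s=O(\varepsilon)$.

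The main obstacle is precisely this Riemannian Gronwall step. On the curved base one cannot subtract two trajectories, so the bound must be obtained intrinsically from the variation field $W$ along $c(s,t)$; commuting $\partial_t$ and $\partial_s$ through the connection of $g^{\mathcal{O}}$ generates curvature and second-fundamental-form (O'Neill) contributions that must be controlled uniformly, which is where the $\mathbb{S}^1$-invariance of $g$ and the Riemannian-submersion property of $\rho$ (Sec.~6), together with the horizontal-lift machinery relating $|\partial_s\gamma|$ to $|W|$, enter. A secondary but necessary point is an a~priori confinement argument: one fixes $L_0\le L$ and a compact tube around the averaged trajectory inside $\mathcal{D}_0$, and then bootstraps the $O(\varepsilon)$ estimate to guarantee that the perturbed projected trajectory stays in this tube for $t\in[0,L_0/\varepsilon]$ whenever $\varepsilon\le\varepsilon_0$, so that all constants above are uniform. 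Combining (a)--(c) through the triangle inequality then yields (\ref{CCC}) with a suitable $c>0$.
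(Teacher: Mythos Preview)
Your overall architecture---normalize via $\Phi_\varepsilon=\mathrm{Fl}^{\varepsilon}_Z$, split by the triangle inequality, and close with an $O(\varepsilon)$-coefficient Gronwall estimate---matches the paper. The essential difference, and the place where your sketch has a real gap, is the Gronwall step~(b).

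You propose to project the interpolated surface $\gamma(s,t)=\mathrm{Fl}^{t}_{X_s}(\hat m^{0})$ to $c(s,t)=\rho\circ\gamma(s,t)$ and run Gronwall on $\mathcal{O}$ for $W=\rho_{*}\partial_s\gamma$. The variational equation you write,
\[
\tfrac{D^{\mathcal{O}}}{\partial t}W=\varepsilon\,\nabla^{\mathcal{O}}_{W}\langle X_1\rangle_{\mathcal{O}}+\varepsilon^{2}(\cdots),
\]
hides the problem: the remainder $R_\varepsilon$ is \emph{not} $\mathbb{S}^{1}$-invariant, so $\rho_{*}(R_\varepsilon(\gamma))$ is only a vector along $c$, not a vector field on $\mathcal{O}$, and its $s$-derivative contains $\nabla_{(\partial_s\gamma)^{\mathrm{vert}}}R_\varepsilon$. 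Thus the ``inhomogeneous $O(\varepsilon^{2})$ term'' actually carries a factor of the \emph{vertical} variation $(\partial_s\gamma)^{\mathrm{vert}}$, which is not controlled by $|W|$. There is no ``horizontal-lift machinery relating $|\partial_s\gamma|$ to $|W|$'' in the sense you need: the vertical part of $\partial_s\gamma$ evolves under $\nabla X_0$ (an $O(1)$ operator) and is not a priori bounded on $[0,L_0/\varepsilon]$. One can rescue this by a coupled Gronwall for $(W,(\partial_s\gamma)^{\mathrm{vert}})$ exploiting that $X_0+\varepsilon\langle X_1\rangle$ is $\mathbb{S}^1$-invariant, but that is an additional nontrivial argument your proposal does not supply.

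The paper sidesteps the issue by \emph{not} projecting to $\mathcal{O}$ for the Gronwall step. Instead it replaces the trajectory $\gamma_\varepsilon=\mathrm{Fl}^{t}_{\widetilde X_{\varepsilon,1}}(m_\varepsilon)$ by the \emph{horizontal lift} $\widetilde\alpha_\varepsilon$ of its projection; Proposition~\ref{proplift} shows that $\widetilde\alpha_\varepsilon$ is the trajectory of the time-dependent \emph{horizontal} field $(\varrho^{t})_{*}\widetilde X_{\varepsilon,1}^{\mathrm{hor}}=\varepsilon\langle X_1\rangle^{\mathrm{hor}}+\varepsilon^{2}(\varrho^{t})_{*}R_\varepsilon^{\mathrm{hor}}$, where $\varrho^{t}=\mathrm{Fl}^{\tau(t)}_{\Upsilon}$ is an isometry. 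The $s$-family is then $Y_t(\varepsilon,s)=\varepsilon\langle X_1\rangle^{\mathrm{hor}}+s\varepsilon^{2}(\varrho^{t})_{*}R_\varepsilon^{\mathrm{hor}}$ with initial points $m_{s\varepsilon}$, and the Gronwall estimate (Proposition~\ref{propgron}) is applied on $M$. Because $\varrho^{t}$ is an isometry, Lemma~\ref{lemisom} gives $\|\nabla Y_t\|\le\varepsilon\|\nabla\langle X_1\rangle^{\mathrm{hor}}\|+\varepsilon^{2}\|\nabla R_\varepsilon^{\mathrm{hor}}\|$ and $\|\partial_s Y_t\|\le\varepsilon^{2}\|R_\varepsilon^{\mathrm{hor}}\|$, so the Gronwall coefficient is genuinely $O(\varepsilon)$ with no hidden vertical-variation term. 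The passage back to $\mathrm{dist}^{\mathcal{O}}$ then uses only that $\rho$ is distance nonincreasing (Lemma~\ref{lemma2}). A further minor difference: the paper uses two triangle terms rather than your three, absorbing your term~(c) into the surface by taking $m_{s\varepsilon}$ (not $\hat m^{0}$) as the $s$-dependent initial point.
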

If $\varepsilon \rightarrow 0$, we have $\rho\circ\mathrm{Fl}_{\mathbf{X}_{\varepsilon}}^{t} (m^{0})\rightarrow \rho(m^0) $ and $\mathrm{Fl}_{\langle X_{1} \rangle_{\mathcal{O}}}^{\varepsilon t}(z^{0} )=\mathrm{Fl}_{\varepsilon\langle X_{1} \rangle_{\mathcal{O}}}^{ t}(z^{0} )\rightarrow z^0 $. Therefore,  the left hand side of (\ref{CCC}) is identically zero and the conclusion of the Theorem remains true for $\varepsilon =0$ which corresponds to  the unperturbed case.

In fact, we can calculate constant $c$ in (\ref{CCC}) and its value depends only on the Riemannian structure of $M$,
the $\mathbb{S}^{1}$-action induced by $X_0$ and the choice of the open domain $\mathcal{D}_0$ (see Corollary
\ref{cor_cons} and Remark \ref{remest}).

\begin{example}[\textbf{One-frequency systems}]
Let us consider $M = \mathbb{S}^{1} \times \mathbb{R}^k $ with the usual angular coordinate
$\varphi\, (\mathrm{mod} \, 2\pi)$ and $\mathbf{x}=(x_1,x_2,\ldots,x_k)\in \mathbb{R}^k $. Let
${X}_{\varepsilon} = X_{0} +\varepsilon X_{1}$ be a perturbed vector field where
\begin{equation*}
X_0 = \omega(\mathbf{x})\frac{\partial}{\partial \varphi}, \qquad
X_1 = f (\mathbf{x},\varphi)\frac{\partial}{\partial \varphi} + g_{i} (\mathbf{x},\varphi) \,
\frac{\partial}{\partial x_i},
\end{equation*}
with frequency function $\omega >0$ and $f=f (\mathbf{x},\varphi)$, $g_i=f (\mathbf{x},\varphi)$, $2\pi$-periodic
functions in $\varphi$. The vector field ${\partial}/{\partial\varphi}$ is the infinitesimal generator of
the $\mathbb{S}^{1}$-action induced by $X_0$. The orbit space $\mathcal{O}$ can  be identified with $\mathbb{R}^k$,
$\mathcal{O}\cong \mathbb{R}^k$. The average of $X_1$ is given by
\begin{equation*}
\langle X_1 \rangle = \langle f \rangle (\mathbf{x})\frac{\partial}{\partial \varphi} + \langle g_i \rangle
(\mathbf{x})\,\frac{\partial}{\partial x_i},
\end{equation*}
and its reduced vector field is
$\langle X_1 \rangle_\mathcal{O}=\langle g_i \rangle (\mathbf{x})\,{\partial}/{\partial x_i}$. If a trajectory
of $\langle X_1 \rangle_\mathcal{O}$ remains, over time $t = {T}/{\varepsilon}$, on an open domain
$A \subset \mathbb{R}^k$ having compact closure, then {\upshape Theorem \ref{aver_theo}} reduces to the classical
result of averaging for a single-frequency system {\upshape \cite{Arno-63,ArKN-88,Ne-08,verhulst3}}.
\end{example}

\section{$\mathbb{S}^{1}$-invariant normalization of perturbed vector fields}
\label{sec:4}

In the proof of the classical averaging theorem \cite{Arno-63,ArKN-88,verhulst3}, an important role  is played by coordinate changes taking
the original perturbed vector field ${X}_\varepsilon$ into its $\mathbb{S}^{1}$-invariant normal form of first
order, (see Definition \ref{firstorder}) . More precisely, such a change of coordinates allow us to replace the vector field $X_1$ by the
$\mathbb{S}^{1}$-invariant vector field $ \langle X_1 \rangle$ together with a small perturbation (of order
$\varepsilon^2$). The change of coordinates that needs to be performed belongs to the class of \emph{near
identity transformations}. A precise definition and a procedure to construct such a kind of transformations is
given in what follows.

For a nonempty open domain $N \subset M $ and a constant $\delta > 0$ a \emph{near identity transformation} is a
smooth mapping $\Phi: (-\delta,\delta) \times N \rightarrow M$ such that for every $\varepsilon \in (-\delta,\delta)$,
the map defined by $
\Phi_\varepsilon(x):=\Phi(\varepsilon,x)$ is a diffeomorphism onto its image and $\Phi_0 \equiv \mathrm{id}$.

Near identity transformations have the following important property: for a perturbed vector field
${X}_\varepsilon = X_0 + \varepsilon X_1$, the pullback $\Phi_\varepsilon^{\ast} {X}_\varepsilon$ is again a
perturbed vector field whose unperturbed part is $X_0$,
\begin{equation*}
\Phi_{\varepsilon}^{\ast} {X}_{\varepsilon} \Big|_{\varepsilon=0} = X_0.
\end{equation*}
By the Flow Box Theorem \cite{marsden2}, for any open domain $N \subset M$ with compact closure and for an arbitrary smooth
vector field $Z$ on $M$ there exists $\delta >0$ such that the mapping
$\Phi: (-\delta,\delta) \times N \rightarrow M$ given by
\begin{equation}  \label{NI}%
\Phi_{\varepsilon} := \mathrm{Fl}_{Z}^{t} \Big|_{t=\varepsilon}
\end{equation}
is a near identity transformation.

Now we deal with the problem of $\mathbb{S}^{1}$-invariant normal forms of perturbed vector fields.
\begin{definition}\label{firstorder}
Let $X_{\varepsilon} = X_0 + \varepsilon X_1 + O(\varepsilon^2)$ be a perturbed vector field on $M$. It is said that
$X_{\varepsilon}$ is in $\mathbb{S}^{1}$-\emph{invariant normal form of first order} if $X_1$ is
$\mathbb{S}^{1}$-invariant. Moreover, ${X}_\varepsilon$ admits a global $\mathbb{S}^{1}$-normalization of first order
if for any open domain $N$, with compact closure, there exists $\delta > 0$ and a near identity transformation
$\Phi_\varepsilon : (-\delta,\delta) \times N \rightarrow M$ which brings ${X}_\varepsilon$ to a
$\mathbb{S}^{1}$-invariant normal form of first order. In this case, $\Phi_\varepsilon$ is called a
\emph{normalization transformation}.
\end{definition}

If we take the vector field
\begin{equation}    \label{hom_sol}%
Z = \frac{1}{\omega} \mathcal{S}(X_{1}) + \frac{1}{\omega^{3}} \mathcal{S}^{2}(\mathcal{L}_{X_{1}} \omega) X_{0},
\end{equation}
and consider the near identity transformation (\ref{NI}), we get the following $\mathbb{S}^1$-invariant normal
form result.

\begin{proposition} \label{pronorm}
Let $X_{0}$ and $X_{1}$ be vector fields on $M$. If $X_0$ has periodic flow, then for any open domain $N \subset M$, having compact closure, there exists a constant $\delta > 0$ such that the
near identity transformation {\upshape (\ref{NI})} given by the flow  of vector field $Z$ in {\upshape (\ref{hom_sol})},
brings ${X}_\varepsilon = X_0  + \varepsilon X_1$ into the $\mathbb{S}^{1}$-invariant
normal form
\begin{equation*}
\Phi_\varepsilon^{\ast} {X}_\varepsilon = X_0 + \varepsilon \langle X_1 \rangle + \varepsilon^2 R_{\varepsilon},
\end{equation*}
where $R= R_{\varepsilon}$ is a vector field on $M$, smoothly depending on $\varepsilon$.
\end{proposition}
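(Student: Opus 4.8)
The plan is to compute the Taylor expansion in $\varepsilon$ of the pullback $\Phi_\varepsilon^{\ast}{X}_\varepsilon$ directly, using the fundamental relation between the Lie derivative and pullback along a flow, and then to invoke the homological equation of Proposition \ref{homol_prop} to identify the first-order term as $\langle X_1\rangle$. First I would recall the basic identity $\tfrac{\mathrm{d}}{\mathrm{d}t}(\mathrm{Fl}_{Z}^{t})^{\ast}Y = (\mathrm{Fl}_{Z}^{t})^{\ast}\mathcal{L}_{Z}Y$, valid for any smooth vector field $Y$. The vector field $Z$ in (\ref{hom_sol}) is smooth, since $\omega>0$ and the operators $\mathcal{S}$ and $\mathcal{S}^{2}$ produce smooth vector fields; hence the Flow Box Theorem on $N$ — whose closure is compact — supplies $\delta>0$ so that $\Phi_\varepsilon=\mathrm{Fl}_{Z}^{t}\big|_{t=\varepsilon}$ is a near identity transformation for $\varepsilon\in(-\delta,\delta)$. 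Applying Taylor's theorem with integral remainder to the smooth curve $\varepsilon\mapsto\Phi_\varepsilon^{\ast}Y$ yields
\[
\Phi_\varepsilon^{\ast}Y = Y + \varepsilon\,\mathcal{L}_{Z}Y + \varepsilon^{2}\int_{0}^{1}(1-s)\,\Phi_{s\varepsilon}^{\ast}\mathcal{L}_{Z}^{2}Y\,\mathrm{d}s,
\]
where the remainder integral is a vector field depending smoothly on $\varepsilon$.

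Next I would apply this expansion separately to $X_0$ and to $X_1$, writing $\Phi_\varepsilon^{\ast}{X}_\varepsilon = \Phi_\varepsilon^{\ast}X_0 + \varepsilon\,\Phi_\varepsilon^{\ast}X_1$. The second summand contributes $\varepsilon X_1$ at first order and is $O(\varepsilon^{2})$ beyond that, while the first contributes $X_0 + \varepsilon\,\mathcal{L}_{Z}X_0$ plus an $O(\varepsilon^{2})$ remainder. The decisive step is to rewrite $\mathcal{L}_{Z}X_0 = [Z,X_0] = -[X_0,Z] = -\mathcal{L}_{X_0}Z$ and invoke Proposition \ref{homol_prop} with $R = X_1$, which gives $\mathcal{L}_{X_0}Z = X_1 - \langle X_1\rangle$, hence $\mathcal{L}_{Z}X_0 = -(X_1 - \langle X_1\rangle)$. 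Collecting the first-order terms,
\[
\Phi_\varepsilon^{\ast}{X}_\varepsilon = X_0 + \varepsilon\bigl(-(X_1-\langle X_1\rangle) + X_1\bigr) + \varepsilon^{2}R_\varepsilon = X_0 + \varepsilon\,\langle X_1\rangle + \varepsilon^{2}R_\varepsilon,
\]
where $R_\varepsilon$ is obtained by gathering the two remainder integrals and is therefore smooth in $\varepsilon$. Since $\langle X_1\rangle$ is $\mathbb{S}^{1}$-invariant by property 1 of the averaging operator, namely $\mathcal{L}_{\mathrm{\Upsilon}}\langle X_1\rangle = 0$, this is precisely the asserted $\mathbb{S}^{1}$-invariant normal form of first order.

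I expect the only delicate point to be the bookkeeping that guarantees $R_\varepsilon$ is genuinely smooth and defined on all of $N$. This is handled by the integral form of the Taylor remainder together with the compactness of $\overline{N}$, which yields a uniform $\delta$ so that every flow $\Phi_{s\varepsilon}$ entering the remainder (with $s\in[0,1]$) is defined and smooth. Everything else is an immediate consequence of the homological equation, which does the real work of cancelling the non-invariant part of $X_1$ and leaving behind its average.
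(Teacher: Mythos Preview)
Your proof is correct and follows essentially the same route as the paper's: both compute the first-order expansion $\Phi_\varepsilon^{\ast}X_\varepsilon = X_0 + \varepsilon(X_1 - \mathcal{L}_{X_0}Z) + O(\varepsilon^2)$ and then invoke Proposition~\ref{homol_prop} to identify $\mathcal{L}_{X_0}Z = X_1 - \langle X_1\rangle$. The only cosmetic difference is that the paper cites the Lie transform method and Deprit's diagram for the expansion, whereas you carry out the Taylor expansion with integral remainder explicitly---which has the minor advantage of making the smoothness of $R_\varepsilon$ transparent.
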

\begin{proof}
By using the non-canonical Lie transform method and Deprit's diagram \cite{Dep-69}, we can compute the expansion of
$(\Phi_{\varepsilon})^{\ast} {X}_{\varepsilon}$ up to any order in $\varepsilon$.
In particular, we have
\begin{equation}   \label{Aeps_1}%
(\Phi_{\varepsilon})^{\ast} {X}_{\varepsilon} = X_{0} + \varepsilon (X_1 - \Li_{ X_0}Z  ) + O(\varepsilon^{2}),
\end{equation}
where $Z$ is the infinitesimal generator of the mapping $\Phi_{\varepsilon}$, as in   (\ref{NI}). It
follows that $\Phi_{\varepsilon}$ is a normalization transformation if
and only if there exist vector fields $W$ and $Z$ satisfying the homological type equation
\begin{equation}  \label{eq:homtype}%
\Li_{X_0} Z = X_{1} - W,
\end{equation}
where $W$ is $\mathbb{S}^{1}$-invariant. Since $X_0$ has periodic flow, Proposition \ref{homol_prop} implies that
vector fields $W=\langle X_1 \rangle $ and
$Z = \frac{1}{\omega}\mathcal{S}(X_{1}) + \frac{1}{\omega^{3}} \mathcal{S}^{2}(\mathcal{L}_{X_{1}} \omega)X_{0}$ are
solutions of equation (\ref{eq:homtype}).
\end{proof}

\section{Gronwall's type estimations on Riemannian  manifolds}
\label{sec:5}

Suppose that  $\varphi: I \subset \mathbb{R} \rightarrow \mathbb{R}$ is a continuous function such that for
$t_{0}\leq t\leq t_{0}+L$, we have
\[
\varphi(t)\leq\delta_{2}(t-t_{0})+\delta_{1}\int_{t_{0}}^{t}\varphi(\tau
)d\tau+\delta_{3},
\]
with constants $\delta_{1} > 0$ and $\delta_{2}\geq 0$, $\delta_{3}\geq0$. The well known Gronwall's lemma asserts
that
\begin{equation}  \label{specGron}%
\varphi(t) \leq\left( \frac{\delta_{2}}{\delta_{1}} + \delta_{3} \right) \mathrm{e}^{\delta_{1} (t-t_{0})}
- \frac{\delta_{2}}{\delta_{1}},
\end{equation}
holds for $t_{0} \leq t \leq t_{0} + L$, \cite{verhulst3}. By using this fundamental inequality, it is possible to
get some estimates for the time evolution of the distance between points of trajectories of two vector fields on
a general Riemannian manifold.

Let $(M, g)$ be a connected Riemannian manifold. Denote by $\mathrm{dist} : M \times M
\rightarrow \mathbb{R}$ the distance function induced by the Riemannian metric $g$. For a
submanifold $N\subset M$, $\mathrm{dist}_N$ will denote the restriction of the distance function to $N$.

Let $\nabla: \mathfrak{X}(M)\times \mathfrak{X}(M)\rightarrow \mathfrak{X}(M) $ be the Levi-Civita connection
associated to $(M, g)$. From the basic properties of the Levi-Civita connection,
$(\nabla_Y X)(m)$ depends only on the value of $Y$ at $m$. Therefore, for each $X\in\mathfrak{X}(M)$ and $m\in M$
we have a linear map $ (\nabla X)_m : (T_{m} M, g_m) \rightarrow (T_{m} M, g_m)$,
$Y(m)\mapsto \nabla_{Y(m)}X$.

In what follows, $\nabla X$ will denote the covariant derivative
of the vector field $X$ and $\| (\nabla X)_m \|$ the operator norm  defined by
\begin{equation*}
\| (\nabla X)_m \| := \sup \{\ \| \nabla_{Y(m)} X \|_m\ \ :\ Y \in \mathfrak{X}(M) \text{ and } \| Y(m) \|_m =1   \}
\end{equation*}
where $\| Y \|_m := g_m (Y,Y). $
Given a diffeomorphism $\varphi: M\rightarrow  M$, the push-forward $\varphi_{\ast}\nabla : \mathfrak{X}( M) \times
\mathfrak{X}( M) \rightarrow \mathfrak{X}( M)$ of the covariant derivative $\nabla$ is defined by
\[
(\varphi_{\ast}\nabla)_{\varphi_{\ast}Y}\varphi_{\ast}X=\varphi_{\ast}(\nabla_{Y}X)
\]
for all $X,Y\in\mathfrak{X}( M)$. Hence, if $\varphi$ is an isometry, then $\varphi$ preserves the connection $\nabla$, that is,
$\varphi_{\ast}\nabla=\nabla$.

We  prove the following technical fact.
\begin{lemma}  \label{lemisom}%
Let $\varphi$ be an isometry on the Riemannian manifold $( M, g)$ and let $X \in\mathfrak{X}( M)$ be
a vector field. Then, $\| \varphi_{\ast}X \|_{m} =\| X \|_{\varphi^{-1}(m)}$ and for the vector bundle morphisms
$\nabla(\varphi_{\ast}X)$ and $\nabla X$ we have
\begin{equation}
\|\nabla(\varphi_{\ast}X)\|_{m}=\|\nabla X\|_{\varphi^{-1}(m)}. \label{iso}
\end{equation}
\end{lemma}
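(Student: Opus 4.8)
The plan is to prove both claimed identities directly from the defining property of an isometry, namely that $\varphi$ preserves the metric, $g_{\varphi(p)}(\varphi_* u, \varphi_* v) = g_p(u,v)$ for all $u,v \in T_p M$, together with the fact (established just above in the excerpt) that an isometry preserves the Levi-Civita connection, $\varphi_* \nabla = \nabla$. First I would dispose of the pointwise norm statement $\|\varphi_* X\|_m = \|X\|_{\varphi^{-1}(m)}$. Writing $p = \varphi^{-1}(m)$, by definition $(\varphi_* X)(m) = (T_p\varphi)(X(p))$, and since $T_p\varphi$ is a linear isometry from $(T_p M, g_p)$ onto $(T_m M, g_m)$, it preserves lengths of vectors. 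Hence $\|(\varphi_*X)(m)\|_m = \|X(p)\|_p = \|X\|_{\varphi^{-1}(m)}$, which is the first assertion.

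For the operator-norm identity (\ref{iso}), the key idea is to show that the linear map $(\nabla(\varphi_*X))_m$ is conjugate to the linear map $(\nabla X)_p$ via the metric isometry $T_p\varphi$, from which equality of operator norms is immediate. Concretely, I would pick an arbitrary $Y \in \mathfrak{X}(M)$ and compute $\nabla_{(\varphi_*Y)(m)}(\varphi_*X)$ using the connection-invariance $\varphi_*\nabla = \nabla$. Rewriting the defining relation $(\varphi_*\nabla)_{\varphi_*Y}\varphi_*X = \varphi_*(\nabla_Y X)$ with $\varphi_*\nabla = \nabla$ gives $\nabla_{\varphi_*Y}(\varphi_*X) = \varphi_*(\nabla_Y X)$. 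Evaluating at $m$ and recalling that $(\nabla X)_p$ depends only on the pointwise value of $Y$ at $p$, this says
\[
(\nabla(\varphi_*X))_m \circ (T_p\varphi) = (T_p\varphi) \circ (\nabla X)_p
\]
as linear maps $T_pM \to T_mM$, i.e. the two covariant-derivative operators are intertwined by the linear isometry $T_p\varphi$.

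The operator-norm claim then follows from a standard fact about linear maps: if $A : V \to V$ and $B : W \to W$ are linear operators on inner product spaces and $\Theta : V \to W$ is a linear isometry with $B\Theta = \Theta A$, then $\|B\| = \|A\|$. Indeed, for any unit vector $w \in W$ write $w = \Theta v$ with $v = \Theta^{-1}w$ a unit vector (since $\Theta$ is an isometry), and then $\|Bw\|_W = \|B\Theta v\|_W = \|\Theta A v\|_W = \|Av\|_V$; taking the supremum over unit vectors in each space gives $\|B\| = \|A\|$. Applying this with $V = (T_pM, g_p)$, $W = (T_mM, g_m)$, $\Theta = T_p\varphi$, $A = (\nabla X)_p$, and $B = (\nabla(\varphi_*X))_m$ yields $\|\nabla(\varphi_*X)\|_m = \|\nabla X\|_p = \|\nabla X\|_{\varphi^{-1}(m)}$, as required.

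I do not expect any genuine obstacle here; the statement is essentially a bookkeeping consequence of two naturality properties already in hand. The only point requiring mild care is the transition from the global vector-field identity $\nabla_{\varphi_*Y}(\varphi_*X) = \varphi_*(\nabla_YX)$ to the pointwise intertwining relation between the operators $(\nabla X)_p$ and $(\nabla(\varphi_*X))_m$: one must use that $(\nabla_W X)(q)$ depends only on $W(q)$ (the tensoriality in the lower slot, already noted in the excerpt) so that ranging $Y$ over all vector fields lets $(\varphi_*Y)(m)$ attain every vector in $T_mM$, thereby pinning down the full linear map rather than its value on a single direction.
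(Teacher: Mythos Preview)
Your proof is correct and follows essentially the same approach as the paper: both use the connection-invariance $\varphi_*\nabla=\nabla$ to obtain the intertwining relation $\nabla_v(\varphi_*X)=(T_p\varphi)\bigl(\nabla_{(T_p\varphi)^{-1}v}X\bigr)$, and then deduce the operator-norm equality from the fact that $T_p\varphi$ is a linear isometry. Your presentation packages the last step as an abstract conjugation lemma, whereas the paper works directly with an arbitrary $v\in T_mM$, but the content is the same.
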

\begin{proof}
Since $\varphi$ is an isometry, we have that $\nabla_{\varphi_{\ast}Y}\varphi_{\ast}X=\varphi_{*}(\nabla_{Y}X)$ and hence
\[
\nabla_{v} (\varphi_{\ast}X) = (\mathrm{d}_{\varphi^{-1}(m)}\varphi) ( \nabla_{\mathrm{d}_{m}\varphi^{-1}(v)}X ),
\]
for every $v\in T_{m} M$. It follows that
\[
\| (\nabla \varphi_{\ast}X)(v) \|_{m} =\| \nabla X(\mathrm{d}_{m} \varphi^{-1}(v)) \|_{\varphi^{-1}(m)}.
\]
This last equality together with $\| \mathrm{d}_{m}\varphi^{-1}(v) \|_{\varphi^{-1} (m)} = \| v \|_{m}$ implies (\ref{iso}).
\end{proof}

Now, we will construct a parameterized surface $(t,s) \mapsto \gamma(t,s)$ on a manifold $M$, which is generated by
trajectories of a parameter dependent family of vector fields, as follows. Let $X_s$ be a one-parameter vector field on $M$, smoothly depending on the parameter $s\in [0,1]$. $\mathrm{Fl}_{X_{s}}^{t}$ denotes the flow of $X_s$ for each $s$. Let $\beta : [0,1] \rightarrow M$ be a parameterized smooth curve on $M$. Then, one can fix $L > 0$ such that for each $s\in [0,1]$, the trajectory $t\mapsto\mathrm{Fl}_{X_{s}}^{t} (\beta(s))$
is defined for all $t \in [0,L]$. The resulting parameterized surface is given by
\begin{eqnarray}
\gamma&:& [0,L ]\times [0,1] \rightarrow M \nonumber \\
\gamma(t,s) &:=& \mathrm{Fl}_{X_{s}}^{t} ( \beta(s) ). \label{FLC}%
\end{eqnarray}
Notice that $\mathrm{Fl}_{X_{s}}^{0} ( \beta(s) ) = \beta(s)$, thus,  for each $t$, the $s$-curve $s \mapsto \gamma_{t}(s) =\gamma(t,s)$ can be viewed as the time evolution of
the ``initial'' curve $\beta(s)$ under the flow of $X_{s}$.

\begin{proposition}  \label{propgron}
The length $L(t)$ of the $s$-curve $s\mapsto\gamma_{t}(s)$ on the parameterized surface {\upshape (\ref{FLC})} satisfies the Gronwall's type estimate
\begin{equation}  \label{Gron1}%
L(t) \leq \left(  \frac{C_{2}}{C_{1}} + L(0) \right)  \mathrm{e}^{C_{1}t} - \frac{C_{2}}{C_{1}},
\end{equation}
for all $t\in [0,L]$. Here
\[
C_{1} = \sup_{\substack{m\in\gamma([0,T]\times[0,1])\\s\in[0,1]}} \left \Vert (\nabla X_{s})_{m} \right \Vert \qquad  \text{and} \qquad
C_{2}=\sup_{\substack{t\in[0,T]\\s\in[0,1]}} \left \Vert \frac{\mathrm{d} }{ \mathrm{d} s} X_{s} \right \Vert _{\gamma(t,s)}.
\]
\end{proposition}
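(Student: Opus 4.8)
The plan is to express $L(t)$ as the integral over $s\in[0,1]$ of the norm of the variation field $J(t,s):=\partial_s\gamma(t,s)$, to derive a pointwise differential inequality for $t\mapsto\|J(t,s)\|$, and then to integrate in $s$ and feed the result into the scalar Gronwall estimate (\ref{specGron}). Since $L(t)=\int_0^1\|J(t,s)\|\,\mathrm{d}s$, everything reduces to controlling the rate of growth of $\|J\|$ along the $t$-direction.

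First I would introduce the two coordinate vector fields of the surface (\ref{FLC}): the $t$-field $T(t,s):=\partial_t\gamma(t,s)$, which by the definition of $\gamma$ as the flow of $X_s$ equals $X_s(\gamma(t,s))$, and the $s$-field $J=\partial_s\gamma$. Because the Levi-Civita connection $\nabla$ is torsion-free, the symmetry lemma for parameterized surfaces gives $\nabla_{\partial_t}J=\nabla_{\partial_s}T$. The key computation is then the right-hand side: as $T(t,s)=X_s(\gamma(t,s))$ depends on $s$ both through the explicit parameter and through the base point, a local coordinate computation splitting the $s$-derivative into its Christoffel part and its ordinary part yields
\[
\nabla_{\partial_s}T=(\nabla X_s)_{\gamma(t,s)}(J)+\frac{\mathrm{d}}{\mathrm{d}s}X_s\Big|_{\gamma(t,s)},
\]
where the first term is the vector bundle morphism $\nabla X_s$ applied to $J$ and the second is the partial derivative of the family at the fixed point $\gamma(t,s)$.

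With this identity I would differentiate the squared norm, $\frac{\mathrm{d}}{\mathrm{d}t}\|J\|^2=2\,g(\nabla_{\partial_t}J,J)$, substitute the expression above, and bound each summand by Cauchy--Schwarz together with the definition of the operator norm, obtaining $\frac{\mathrm{d}}{\mathrm{d}t}\|J\|^2\le 2C_1\|J\|^2+2C_2\|J\|$ with $C_1,C_2$ exactly as in the statement. On the set where $J\neq0$, dividing by $2\|J\|$ gives $\frac{\mathrm{d}}{\mathrm{d}t}\|J\|\le C_1\|J\|+C_2$; integrating in $s$ (differentiating under the integral sign) yields $\frac{\mathrm{d}}{\mathrm{d}t}L(t)\le C_1 L(t)+C_2$, that is $L(t)\le L(0)+C_2 t+C_1\int_0^t L(\tau)\,\mathrm{d}\tau$. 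Applying (\ref{specGron}) with $\delta_1=C_1$, $\delta_2=C_2$, $\delta_3=L(0)$ and $t_0=0$ reproduces exactly the desired inequality (\ref{Gron1}).

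The step I expect to be the main obstacle is the division by $\|J\|$, since $\|J(t,s)\|$ need not be differentiable where the variation field vanishes. I would sidestep this by the standard regularization: replace $\|J\|$ by the smooth function $f_\eta:=\sqrt{\|J\|^2+\eta^2}$. Because $\|J\|\le f_\eta$, the same computation gives $\frac{\mathrm{d}}{\mathrm{d}t}f_\eta\le C_1 f_\eta+C_2$, so that $L_\eta(t):=\int_0^1 f_\eta\,\mathrm{d}s$ satisfies the integral inequality above and hence (\ref{Gron1}) with $L$ replaced by $L_\eta$; letting $\eta\to0$ recovers the claim by monotone convergence. A minor point to verify beforehand is that $C_1$ and $C_2$ are finite, which holds because $\gamma([0,L]\times[0,1])$ has compact closure and the data are smooth, so the constants appearing in (\ref{specGron}) are legitimate.
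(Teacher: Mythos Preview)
Your argument is correct and follows essentially the same route as the paper: both use the symmetry lemma $\nabla_{\partial_t}\partial_s\gamma=\nabla_{\partial_s}\partial_t\gamma$, the same splitting $\nabla_{\partial_s}(X_s\circ\gamma)=(\nabla X_s)(\partial_s\gamma)+\tfrac{\mathrm{d}}{\mathrm{d}s}X_s$, and then feed the resulting integral inequality $L(t)\le L(0)+C_2 t+C_1\int_0^t L$ into the scalar Gronwall lemma. The only difference is cosmetic order of integration, and you are in fact more careful than the paper about the differentiability of $\|J\|$ at its zeros via the $\sqrt{\|J\|^2+\eta^2}$ regularization.
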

\begin{proof}
\textcolor[rgb]{0.00,0.00,1.00}{It follows directly from the inequality $ \left|\frac{\partial}{\partial t}\|\partial \gamma/\partial s \| \right| \leq \|\nabla_{\frac{\partial \gamma}{\partial t}}(\partial \gamma/\partial s )  \| $ that}
\begin{equation*}
\left\|\frac{\partial \gamma}{\partial s}(t,s) \right\|_{\gamma(t,s)}\leq \left\|\frac{\partial \gamma}{\partial s}(0,s) \right\|_{\gamma(0,s)} +
\int_{0}^{t} \left\|\nabla_{\frac{\partial \gamma}{\partial t}} \bigl( {\partial \gamma}/{\partial s}(\textcolor[rgb]{0.00,0.00,1.00}{t'},s) \bigr) \right\|_{\gamma(\textcolor[rgb]{0.00,0.00,1.00}{t'},s)} \mathrm{d}\textcolor[rgb]{0.00,0.00,1.00}{t'}.
\end{equation*}
\textcolor[rgb]{0.00,0.00,1.00}{Taking into account that the Levi-Civita connection is torsion free and} $\displaystyle L(t) = \int_{0}^{1} \left\| \frac{\partial}{\partial s} \gamma(t,s) \right\|_{\gamma_{t}(s)} \mathrm{d} s$, integration in $s$ gives
\begin{equation}  \label{long}%
L(t) \leq  L(0) + \int_{0}^{t} \int_{0}^{1} \left\| \nabla_{\frac{\partial\gamma}{\partial s}} \frac{\partial\gamma}{\partial t^{\prime}} \right\|_{\gamma(t^{\prime},s)}
\mathrm{d}s \, \mathrm{d}t^{\prime}.
\end{equation}
Furthermore, for every $t\in [0,L]$ the vector field $\frac{\partial }{\partial t} \gamma (t,s) $ satisfies the relation
\[
\nabla_{\frac{\partial \gamma}{\partial s}} ( {\partial \gamma}/{\partial t})  =\left( \frac{\mathrm{d}}{\mathrm{d} s} X_{s}\right)(\gamma(t,s)) +
(\nabla X_s)_{\gamma(t,s)} \left( {\partial\gamma}/{\partial s} \right),
\]
and thus, we get
\[
\left\| \nabla_{\frac{\partial\gamma}{\partial s}} \bigl( {\partial\gamma}/{\partial t} \bigr) \right\|_{\gamma(t,s)} \leq
\left\| \frac{\mathrm{d}}{\mathrm{d} s}  X_{s} \right\|_{\gamma(t,s)} + \left\| (\nabla X_{s})_{\gamma(t,s)} \right\| \left\|
\frac{\partial\gamma}{\partial s} \right \|_{\gamma(t,s)}.
\]
Putting this inequality into (\ref{long}), we get
\[
L(t) \leq L(0) + C_{1} \int_{0}^{t} L(\textcolor[rgb]{0.00,0.00,1.00}{t'}) \, \mathrm{d} \textcolor[rgb]{0.00,0.00,1.00}{t'} + C_{2} t.
\]
Now, applying the usual Gronwall's lemma to last inequality leads to (\ref{Gron1}).
\end{proof}

\section{Riemannian submersions on the $\mathbb{S}^{1}$-principal bundle $( M,\rho,\mathcal{O})$}
\label{sec:6}

Let $(M, g)$ be a connected Riemannian manifold and let $X_0$ be a vector field with periodic
flow. Assume  that the Riemannian metric $g$ is \textit{invariant} with respect to the
$\mathbb{S}^{1}$-action induced by $X_0$, and the flow $\mathrm{Fl}_{\mathrm{\Upsilon}}^{t}$ is an isometry on
$( M, g)$, where $\mathrm{\Upsilon}$ is the infinitesimal generator of the $\mathbb{S}^{1}$-action induced by $X_0$, (\ref{infgen}). If the
$\mathbb{S}^{1}$-action on $M$ induced by $X_0$ is free, then the triple $( M,\rho,\mathcal{O})$ is an
$\mathbb{S}^{1}$-principal bundle. The \emph{vertical subbundle} $\mathbb{V} := \mathrm{ker \, d} \rho$ coincides
with the one dimensional distribution given by $\mathscr{D}:= \{\mathscr{D}_m \subset T_m M  | \mathscr{D}_m = \mathrm{Span} \{ \mathrm{\Upsilon}(m)  \} \}$. In this case, we choose the \emph{horizontal subbundle}
as the orthogonal complement to $\mathbb{V}$, $\mathbb{H} = \mathbb{V}^\bot$.

Since the Riemannian metric is $\mathbb{S}^{1}$-invariant, the horizontal subbundle is also invariant with respect
to the $\mathbb{S}^{1}$-action,
\[
(\mathrm{d}_{m} \mathrm{Fl}_{\mathrm{\Upsilon}}^{t}) (\mathbb{H}_{m}) = \mathbb{H}_{\mathrm{Fl}_{\mathrm{\Upsilon}}^{t}(m)}, \qquad
\forall \; m\in M.
\]
Thus, we have the $\mathbb{S}^{1}$-invariant orthogonal splitting $T M = \mathbb{H} \oplus \mathbb{V}$, and every
vector field $Y$ on $ M$ decomposes into its horizontal and vertical parts, as
\[
Y = Y^{\mathrm{hor}} + Y^{\mathrm{vert}}.
\]
It is clear that the restriction of the differential $\mathrm{d}_{m}\rho : T_{m} M \rightarrow T_{\rho(m)} \mathcal{O}$
to $\mathbb{H}_{m}$ is an isomorphism. Hence, for every vector field $v\in\mathfrak{X}(\mathcal{O})$ there exists a
unique vector field $\mathrm{hor}(v) \in \mathfrak{X}(M)$, called the \emph{horizontal lift} of $v$, which is tangent
to $\mathbb{H}$ and $\mathrm{d}\rho\circ \mathrm{hor} (v) = v \circ \rho$.

Let $g^{\mathcal{O}}$ be the unique Riemannian metric on the orbit space $\mathcal{O}$ such that
the projection $\rho$ is a Riemannian submersion (see \cite{gallot,michor}),
\begin{equation}  \label{PRI}%
g_{m}( u_{1},u_{2}) = g_{\rho(m)}^{\mathcal{O}}( ( \mathrm{d}_{m}\rho ) u_{1}, ( \mathrm{d}_{m} \rho ) u_{2})
\end{equation}
for any $m\in M$ and $u_{1}, u_{2}\in\mathbb{H}_{m}$. Denote by $\mathrm{dist}^{\mathcal{O}} : \mathcal{O} \times
\mathcal{O} \rightarrow \mathbb{R}$,
the distance function associated to the Riemannian metric $g^{\mathcal{O}}$ on $\mathcal{O}$, and
by $\nabla$ and $\nabla^{\mathcal{O}}$ the Levi-Civita connections on the Riemannian manifolds
$( M, g)$ and $(\mathcal{O}, g^{\mathcal{O}})$, respectively.

\begin{lemma}\label{lemma2}
Let $\gamma : [0,1] \rightarrow M$, $s \mapsto\gamma(s)$, be a smooth curve on $ M$ and let
$\alpha:=\rho\circ\gamma : [0,1] \rightarrow \mathcal{O}$, $s\mapsto\rho (\gamma(s))$, be its projection onto the
orbit space. Let $(\frac{\mathrm{d} \gamma}{\mathrm{d} s}  )^{\mathrm{hor}} \in\mathbb{H}_{\gamma(s)}$ and
$(\frac{\mathrm{d} \gamma}{\mathrm{d}s} )^{\mathrm{vert}} \in \mathbb{V}_{\gamma(s)}$ be, respectively, the
horizontal and vertical components in the orthogonal decomposition
\begin{equation}  \label{HVD}%
\frac{\mathrm{d} \gamma}{\mathrm{d} s}  = \left(\frac{\mathrm{d} \gamma}{\mathrm{d} s} \right)^{\mathrm{hor}} +
\left( \frac{ \mathrm{d} \gamma}{\mathrm{d} s} \right)^{\mathrm{vert}}.
\end{equation}
Then,
\begin{itemize}
\item[\upshape (a)] The arc lengths $L(\gamma)$ and $L(\alpha)$ of the curves $\gamma$ and $\alpha$, respectively,
satisfy the inequalities
\begin{equation}  \label{Ner1}%
L(\alpha) \leq L(\gamma),
\end{equation}
\begin{equation}  \label{Ner2}%
L(\gamma) \leq L(\alpha) + \int_{0}^{1} \left\| \left(\frac{\mathrm{d} \gamma}{ \mathrm{d} s} \right)^{\mathrm{vert}}
\right\| \mathrm{d} s \leq
\sqrt{2} L(\gamma).
\end{equation}
The equality $L(\alpha) = L(\gamma)$ holds if and only if the curve $\gamma$ is horizontal, that is,
$(\frac{\mathrm{d} \gamma}{\mathrm{d} s})^{\mathrm{vert}} = 0$.
\item[\upshape (b)] For any $p, q\in M$, we have
\begin{equation}  \label{DNER}%
\mathrm{dist}^{\mathcal{O}} \bigl( \rho(p),\rho(q) \bigr) \leq \mathrm{dist}(p,q).
\end{equation}
\end{itemize}
\end{lemma}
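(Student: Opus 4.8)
The plan is to reduce both parts to pointwise statements about the velocity vector and then integrate. First I would extract the crucial pointwise consequence of the Riemannian submersion property (\ref{PRI}). Since $\mathbb{V} = \ker \mathrm{d}\rho$, the differential annihilates the vertical component of the velocity, so $\frac{\mathrm{d}\alpha}{\mathrm{d}s} = (\mathrm{d}_{\gamma(s)}\rho)\frac{\mathrm{d}\gamma}{\mathrm{d}s} = (\mathrm{d}_{\gamma(s)}\rho)(\frac{\mathrm{d}\gamma}{\mathrm{d}s})^{\mathrm{hor}}$; applying (\ref{PRI}) to the horizontal vector then yields the isometry identity $\|\frac{\mathrm{d}\alpha}{\mathrm{d}s}\| = \|(\frac{\mathrm{d}\gamma}{\mathrm{d}s})^{\mathrm{hor}}\|$. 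Combining this with the Pythagorean relation coming from the orthogonal decomposition (\ref{HVD}), namely $\|\frac{\mathrm{d}\gamma}{\mathrm{d}s}\|^2 = \|(\frac{\mathrm{d}\gamma}{\mathrm{d}s})^{\mathrm{hor}}\|^2 + \|(\frac{\mathrm{d}\gamma}{\mathrm{d}s})^{\mathrm{vert}}\|^2$, supplies all the pointwise data I need.

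For part (a), inequality (\ref{Ner1}) follows immediately by integrating the estimate $\|(\frac{\mathrm{d}\gamma}{\mathrm{d}s})^{\mathrm{hor}}\| \leq \|\frac{\mathrm{d}\gamma}{\mathrm{d}s}\|$ over $[0,1]$. For (\ref{Ner2}) I would apply the two elementary scalar inequalities $\sqrt{a^2+b^2} \leq a+b \leq \sqrt{2}\,\sqrt{a^2+b^2}$, valid for $a,b\geq 0$, with $a = \|(\frac{\mathrm{d}\gamma}{\mathrm{d}s})^{\mathrm{hor}}\| = \|\frac{\mathrm{d}\alpha}{\mathrm{d}s}\|$ and $b = \|(\frac{\mathrm{d}\gamma}{\mathrm{d}s})^{\mathrm{vert}}\|$; integrating both inequalities over $s$ produces the lower and upper bounds of (\ref{Ner2}) simultaneously. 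For the equality statement, I note that $L(\gamma) - L(\alpha) = \int_{0}^{1} (\|\frac{\mathrm{d}\gamma}{\mathrm{d}s}\| - \|(\frac{\mathrm{d}\gamma}{\mathrm{d}s})^{\mathrm{hor}}\|)\,\mathrm{d}s$ is the integral of a nonnegative continuous integrand, so it vanishes precisely when the integrand vanishes identically; by the Pythagorean relation this is equivalent to $(\frac{\mathrm{d}\gamma}{\mathrm{d}s})^{\mathrm{vert}} = 0$ for every $s$, that is, to $\gamma$ being horizontal.

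For part (b), the argument is purely the characterization of Riemannian distance as an infimum of lengths of joining curves. Given any piecewise smooth curve $\gamma$ in $M$ from $p$ to $q$, its projection $\alpha = \rho\circ\gamma$ joins $\rho(p)$ to $\rho(q)$ in $\mathcal{O}$, whence $\mathrm{dist}^{\mathcal{O}}(\rho(p),\rho(q)) \leq L(\alpha) \leq L(\gamma)$ by (\ref{Ner1}). Taking the infimum over all such $\gamma$ gives (\ref{DNER}).

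I do not anticipate a serious obstacle, since the submersion identity (\ref{PRI}) does the essential work and the remaining steps are elementary scalar inequalities and integration. The only point requiring a little care is the equality discussion in part (a), where continuity of the smooth velocity field is what permits passing from an equality of integrals to the pointwise vanishing of the vertical component.
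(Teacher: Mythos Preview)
Your proposal is correct and follows essentially the same approach as the paper: both rely on the identity $\|\frac{\mathrm{d}\alpha}{\mathrm{d}s}\| = \|(\frac{\mathrm{d}\gamma}{\mathrm{d}s})^{\mathrm{hor}}\|$ from the Riemannian submersion property together with the orthogonal decomposition (\ref{HVD}), and then pass to the distance inequality in (b) via the infimum characterization of distance. The paper treats part (a) as ``evident'' from these ingredients, while you spell out the scalar inequalities and the equality case explicitly; for (b) the paper uses an $\varepsilon$-approximating curve rather than a direct infimum, but the arguments are equivalent.
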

\begin{proof}
Part (a) is evident and follows from the relation $\frac{\mathrm{d} \alpha}{\mathrm{d}s} = (\mathrm{d}_{\gamma(s)} \rho)
(\frac{\mathrm{d} \gamma}{\mathrm{d}s})^{\mathrm{hor}}$, the orthogonal decomposition (\ref{HVD}) and the equality
\begin{equation} \label{COD1}%
\left\|\frac{\mathrm{d}\alpha}{\mathrm{d} s} \right\|^{\mathcal{O}} = \left\|\left(
\frac{\mathrm{d} \gamma}{\mathrm{d} s} \right)^{\mathrm{hor}} \right\|,
\end{equation}
which is a consequence of the property that $\rho$ is a Riemannian submersion. In order to prove part (b), for
arbitrary $p, q\in M$, let us choose a curve $\gamma$ on $ M$ joining points $p$ and $q$ and such that
$\mathrm{dist} (p,q) + \Delta \geq L(\gamma)$, for some $\Delta > 0$. Then, by (\ref{Ner1}) we get
\[
\mathrm{dist}^{\mathcal{O}} (\rho(p),\rho(q))   \leq L(\rho\circ\gamma) \leq L(\gamma) \leq \mathrm{dist} (p,q) + \Delta.
\]
Since $\Delta >  0$ is arbitrary, inequality (\ref{DNER}) is satisfied.
\end{proof}

Now, let $\beta : [a,b] \rightarrow \mathcal{O}$ be a smooth curve on $\mathcal{O}$ passing trough the point
$\beta(a) = x \in \mathcal{O}$. Let $m \in \rho^{-1}(x)$ be a point in the fiber over $x$. A \textit{lifting} of
$\beta$ trough $m$ is a smooth curve $\widetilde{\beta} : [a,b]\rightarrow  M$ such that
\begin{enumerate}
\item[(i)] $m =\widetilde{\beta}(a)$, and
\item[(ii)] $\rho \circ \widetilde{\beta} = \beta$.
\end{enumerate}
In this case, $\widetilde{\beta}$ is called the lift of curve $\beta$ or lifted curve.
A lifted curve $\widetilde{\beta}$ is called \textit{horizontal} if, in addition, it satisfies the following property:
\begin{equation}  \label{horlift}
\frac{\mathrm{d}}{\mathrm{d} t} \widetilde{\beta}(t) \in \mathbb{H}_{\widetilde{\beta}(t)}, \qquad
\forall \; t \in [a,b].
\end{equation}
Since $\mathbb{S}^{1}$ in compact, the fibers are compact. Hence, it follows that for any smooth curve
$\beta : [a,b] \rightarrow \mathcal{O}$, the horizontal lift of $\alpha$ trough $m$ always exists \cite{michor}.
The following statement gives us a key property for the horizontal lift.

\begin{proposition} \label{proplift}
Let $X\in\mathfrak{X}( M)$ be a vector field and $\gamma : [0,T(m^0)]\rightarrow M$ the trajectory
of $X$ through $m^0 \in M$, $\gamma (t) =\mathrm{Fl}_{X}^{t} (m^0)$. Consider the projection
$\alpha = \rho \circ\gamma$ and its horizontal lift $\widetilde{\alpha} : [0,T] \rightarrow M$,
$t \mapsto \widetilde{\alpha} (t)$ through $m^0$,  $\widetilde{\alpha} (0) = m^0$. Then, there exists a smooth
function $\tau : [0,T(m^0)] \rightarrow \mathbb{R}$ such that
$\tau (0) = 0$ and
\begin{equation}  \label{KEY1}%
\widetilde{\alpha} (t) = \varrho^{t} (\gamma(t)),
\end{equation}
where
\begin{equation}  \label{KEY2}%
\varrho^{t} = \mathrm{Fl}_{\mathrm{\Upsilon}}^{\tau(t)}.
\end{equation}
Moreover, the curve $t\mapsto \widetilde{\alpha}(t)\in M$ is the trajectory through $m^0$ of the horizontal
$t$-dependent vector field
\begin{equation}  \label{KEY3}%
\widetilde{X}_{t} = (\varrho^{t})_{\ast}X^{\mathrm{hor}},
\end{equation}
that is,
\[
\frac{\mathrm{d} \widetilde{\alpha}(t)}{ \mathrm{d}t } = \widetilde{X}_{t} (\widetilde{\alpha}(t)),
\]
and the following properties hold:
\begin{equation}  \label{KEY4}%
\| \widetilde{X}_{t} \|_{\widetilde{\alpha} (t)} = \, \|  X^{\mathrm{hor}} \|_{\gamma(t)} ,
\end{equation}
and
\begin{equation}  \label{KEY5}%
\| \nabla_{v} \widetilde{X}_{t} \|_{\widetilde{\alpha}(t)} \; \leq \; \| \nabla X^{\mathrm{hor}} \|_{\gamma(t)} \cdot
\| v \|_{\widetilde{\alpha} (t)},
\end{equation}
for every $v \in T_{\widetilde{\alpha}(t)} M$.
\end{proposition}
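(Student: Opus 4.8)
The plan is to construct the curve $\widetilde{\alpha}$ explicitly rather than abstractly. First I would guess the reparametrization $\tau$ by solving an ordinary differential equation, then form the candidate curve $t\mapsto\mathrm{Fl}_{\mathrm{\Upsilon}}^{\tau(t)}(\gamma(t))$, check that it is a horizontal lift of $\alpha=\rho\circ\gamma$ issuing from $m^0$, and finally invoke the uniqueness of horizontal lifts to conclude it is precisely $\widetilde{\alpha}$. Once the representation $\widetilde{\alpha}(t)=\varrho^t(\gamma(t))$ with $\varrho^t=\mathrm{Fl}_{\mathrm{\Upsilon}}^{\tau(t)}$ and the description $\widetilde{X}_t=(\varrho^t)_{\ast}X^{\mathrm{hor}}$ are in hand, the two norm estimates (\ref{KEY4}) and (\ref{KEY5}) will follow at once from Lemma \ref{lemisom}, since each $\varrho^t$ is an isometry of $(M,g)$.

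For the construction I would decompose $X=X^{\mathrm{hor}}+\lambda\,\mathrm{\Upsilon}$ along the orthogonal splitting $TM=\mathbb{H}\oplus\mathbb{V}$, where $\lambda=g(X,\mathrm{\Upsilon})/g(\mathrm{\Upsilon},\mathrm{\Upsilon})$ is smooth because freeness of the action makes $\mathrm{\Upsilon}$ nowhere vanishing. I then define $\tau$ as the solution of $\dot{\tau}(t)=-\lambda(\gamma(t))$ with $\tau(0)=0$, which is smooth and meets the requirement $\tau(0)=0$. Putting $\widetilde{\alpha}(t):=\mathrm{Fl}_{\mathrm{\Upsilon}}^{\tau(t)}(\gamma(t))$, the fact that $\mathrm{\Upsilon}$ is tangent to the fibers yields $\rho\circ\widetilde{\alpha}=\rho\circ\gamma=\alpha$ and $\widetilde{\alpha}(0)=m^0$, so $\widetilde{\alpha}$ is a lift through $m^0$.

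Differentiating by the chain rule, and using $\partial_\tau\mathrm{Fl}_{\mathrm{\Upsilon}}^{\tau}(m)=\mathrm{\Upsilon}(\mathrm{Fl}_{\mathrm{\Upsilon}}^{\tau}(m))$ together with the invariance $(\mathrm{Fl}_{\mathrm{\Upsilon}}^{\tau})_{\ast}\mathrm{\Upsilon}=\mathrm{\Upsilon}$, I would obtain
\[
\frac{\mathrm{d}}{\mathrm{d}t}\widetilde{\alpha}(t)=\bigl(\dot{\tau}(t)+\lambda(\gamma(t))\bigr)\,\mathrm{\Upsilon}(\widetilde{\alpha}(t))+\bigl((\varrho^t)_{\ast}X^{\mathrm{hor}}\bigr)(\widetilde{\alpha}(t)).
\]
Since $g$ is $\mathbb{S}^{1}$-invariant, $(\varrho^t)_{\ast}$ preserves $\mathbb{H}$, so the second summand is horizontal, while the choice $\dot{\tau}=-\lambda(\gamma(\cdot))$ annihilates the vertical first summand. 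Hence $\frac{\mathrm{d}}{\mathrm{d}t}\widetilde{\alpha}\in\mathbb{H}$, which both identifies $\widetilde{\alpha}$ as the unique horizontal lift and shows it is the integral curve of $\widetilde{X}_t=(\varrho^t)_{\ast}X^{\mathrm{hor}}$, giving (\ref{KEY1})--(\ref{KEY3}).

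For the estimates I would note $\widetilde{X}_t=(\varrho^t)_{\ast}X^{\mathrm{hor}}$ with $\varrho^t$ an isometry and $(\varrho^t)^{-1}(\widetilde{\alpha}(t))=\gamma(t)$. The first assertion of Lemma \ref{lemisom} gives $\|\widetilde{X}_t\|_{\widetilde{\alpha}(t)}=\|X^{\mathrm{hor}}\|_{\gamma(t)}$, which is (\ref{KEY4}); equation (\ref{iso}) of the same lemma gives the operator-norm equality $\|\nabla\widetilde{X}_t\|_{\widetilde{\alpha}(t)}=\|\nabla X^{\mathrm{hor}}\|_{\gamma(t)}$, and the definition of the operator norm then yields $\|\nabla_v\widetilde{X}_t\|_{\widetilde{\alpha}(t)}\leq\|\nabla\widetilde{X}_t\|_{\widetilde{\alpha}(t)}\|v\|_{\widetilde{\alpha}(t)}=\|\nabla X^{\mathrm{hor}}\|_{\gamma(t)}\|v\|_{\widetilde{\alpha}(t)}$, which is (\ref{KEY5}). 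I expect the only real obstacle to lie in the horizontality computation above: carrying out the chain-rule differentiation of $\mathrm{Fl}_{\mathrm{\Upsilon}}^{\tau(t)}(\gamma(t))$ carefully and separating its vertical and horizontal parts, so that the defining ODE for $\tau$ appears naturally and the uniqueness of horizontal lifts applies. The two quantitative estimates---precisely the properties consumed later in the averaging theorem---are then immediate consequences of Lemma \ref{lemisom}.
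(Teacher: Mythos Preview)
Your proof is correct and is essentially the same as the paper's, with the order of the argument inverted: the paper starts from the abstractly existing horizontal lift $\widetilde{\alpha}$, uses the fact that $\widetilde{\alpha}(t)$ and $\gamma(t)$ lie on the same fiber to write $\widetilde{\alpha}(t)=\mathrm{Fl}_{\mathrm{\Upsilon}}^{\tau(t)}(\gamma(t))$, and then differentiates to recover the ODE for $\tau$ and the formula $\widetilde{X}_t=(\varrho^t)_\ast X^{\mathrm{hor}}$; you instead solve the ODE for $\tau$ first, build the candidate curve, and verify horizontality, invoking uniqueness of horizontal lifts. The underlying chain-rule computation and the appeal to Lemma~\ref{lemisom} for (\ref{KEY4}) and (\ref{KEY5}) are identical in both arguments; your constructive order has the minor advantage that smoothness of $\tau$ is immediate from the ODE, whereas the paper leaves this implicit.
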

\begin{proof}
By definition, for each $t$, the points $\widetilde{\alpha}(t)$ and $\gamma(t)$ belong to the same fiber
$\rho^{-1}(\alpha(t))$ and thus they can be joined by a segment of the periodic trajectory of $\mathrm{\Upsilon}$, for
time $\tau =\tau(t)$. Differentiating both sides of (\ref{KEY1}) with respect to $t$ and using decomposition
(\ref{HVD}), we get
\begin{align}
\frac{\mathrm{d} }{\mathrm{d} t} \widetilde{\alpha}(t) &  = (\mathrm{d}_{\gamma(t)}\varrho^{t})
\frac{\mathrm{d}\gamma(t)}{\mathrm{d}t} + \tau^{\prime}(t) \mathrm{\Upsilon}(\gamma(t))  \label{KEY6}\\
&  = (\mathrm{d}_{\gamma(t)} \varrho^{t}) X^{\mathrm{hor}}(\gamma(t)) + (\mathrm{d}_{\gamma(t)}
\varrho^{t})X^{\mathrm{vert}}(\gamma(t)) + \tau^{\prime}(t) \mathrm{\Upsilon}(\gamma(t)).  \nonumber
\end{align}
Remark that the flow of $\mathrm{\Upsilon}$ is an isometry which preserves the splitting of $T M$ into horizontal and
vertical subbundles. Hence, the diffeomorphisms $\varrho^{t}$ have the same properties. From here and the fact that the
velocity $\frac{\mathrm{d}\widetilde{\alpha}(t)}{\mathrm{d} t}$ is a horizontal vector field, we deduce, from
(\ref{KEY6}), the relations
\begin{equation}  \label{KEY7}%
\frac{\mathrm{d} \widetilde{\alpha}(t)}{\mathrm{d}t} = (\mathrm{d}_{\gamma(t)}\varrho^{t}) X^{\mathrm{hor}}(\gamma(t)),
\end{equation}
and
\begin{equation}  \label{Key7a}%
\tau^{\prime}(t) \mathrm{\Upsilon}(\gamma(t)) = - (\mathrm{d}_{\gamma(t)}\varrho^{t})X^{\mathrm{vert}} (\gamma(t)).
\end{equation}
Notice that formula (\ref{Key7a}) defines the function $\tau = \tau(t)$. Putting
$\gamma(t) = (\varrho^{t})^{-1}(\widetilde{\alpha}(t))$ into (\ref{KEY7}) leads to the relation%
\begin{align*}
\frac{\mathrm{d}\widetilde{\alpha} (t)}{\mathrm{d}t}  &  = ( \mathrm{d}_{(\varrho^{t})^{-1}(\widetilde{\alpha} (t))}
\varrho^{t}) X^{\mathrm{hor}}
\bigl( (\varrho^{t})^{-1}(\widetilde{\alpha}(t) ) \bigr) \\
&  = ( \varrho^{t})_{\ast} X^{\mathrm{hor}}( \widetilde{\alpha}(t) ),
\end{align*}
which says that $\widetilde{\alpha}(t)$ is the trajectory through $m^0$ of the vector field $\widetilde{X}_{t}$
in (\ref{KEY3}). Equality (\ref{KEY4}) follows from the property that the differential of $\varrho^{t}$ is a linear isometry
and the representation $\widetilde{X}_{t}(m) = (\mathrm{d}_{m}\varrho^{t}) X \bigl( (\varrho^{t})^{-1}m \bigr)$.
Finally, applying Lemma \ref{lemisom}, we get
\begin{equation*}
\| \nabla_{v} \widetilde{X}_{t} \|_{\widetilde{\alpha}(t)} = \| \nabla_{(\mathrm{d}_{\widetilde{\alpha}(t)}
\varrho^{t})^{-1}v} X^{\mathrm{hor}} \|_{\gamma(t)} \leq \| \nabla X^{\mathrm{hor}} \|_{\gamma(t)} \cdot \| v \|_{\widetilde {\alpha}(t)}%
\end{equation*}
\end{proof}

\section{Proof of main result}
\label{sec:7}

In this section, we present the proof of the periodic averaging theorem (Theorem \ref{aver_theo}), which is done
in several steps. Here, as in the previous sections, we denote by $g$ an
$\mathbb{S}^{1}$-invariant metric on the manifold $M$ and $\mathrm{dist}: M \times M \rightarrow \mathbb{R}$,
the corresponding distance function. Also, let $g^{\mathcal{O}}$ be the unique Riemannian
metric on $\mathcal{O}$ such that the projection $\rho: M\rightarrow \mathcal{O}$ is a Riemannian submersion and
$\mathrm{dist}^{\mathcal{O}} : \mathcal{O} \times \mathcal{O} \rightarrow \mathbb{R} $ its distance function.

Now, take an open domain $\mathcal{D}$ in $\mathcal{O}$ having compact closure and such that
$\mathcal{D}_{0}\subset\mathcal{D}$. Thus, $N_{0}=\rho^{-1}(\mathcal{D}_{0})$ and $N=\rho^{-1}(\mathcal{D})$ are
open domains in $M$, with compact closure. Since for every $m \in N$ the fiber through $m$ is
contained in $N,$  the sets $N_0$ and $N$ are invariant with respect to the $\mathbb{S}^{1}$-action. We also assume that $N$
and $N_0$ are connected.

\noindent\textbf{Step 1} (\textit{Normalization of the perturbed vector field.}) By Proposition \ref{pronorm} there
exists $\delta > 0$ such that the flow of  vector field
\begin{equation} \label{vfieldZ}%
Z = \frac{1}{\omega}\mathcal{S}(X_1) + \frac{1}{\omega^3}\mathcal{S}^2(\Li_{\langle X_1 \rangle} \omega)X_0
\end{equation}
is a near identity transformation, $\Phi : (-\delta, \delta) \times N \rightarrow M$, which takes the vector field
${X}_\varepsilon$ into the $\mathbb{S}^{1}$-invariant normal form,
\[
\Phi_\varepsilon^{\ast} {X}_\varepsilon = X_0 + \varepsilon \langle X_1 \rangle + \varepsilon^2 R_{\varepsilon}.
\]
Now, for each $s\in[0,1]$, define the $(\varepsilon,s)-$dependent vector field
\begin{equation}  \label{TPVF}%
{\widetilde{X}}_{\varepsilon,s} = X_{0 }+ \varepsilon\langle X_{1} \rangle + s \varepsilon^{2} R_{\varepsilon}.
\end{equation}

\noindent\textbf{Step 2} (\textit{Triangle inequality.}) It is easy to see that $\widetilde{X}_{\varepsilon,1}$ is
the $\mathbb{S}^{1}$-invariant normal form of first order of ${X}_\varepsilon$  and
$\widetilde{X}_{\varepsilon,0} =  X_0 + \varepsilon \langle X_1 \rangle$ is an $\mathbb{S}^{1}$-invariant vector
field $\rho$-related with $\varepsilon \langle X_1 \rangle_{\mathcal{O}}$.  By the triangle inequality, we get
\begin{align}
\mathrm{dist}^{\mathcal{O}} \Bigl( \rho \circ \mathrm{Fl}_{X_\varepsilon}^{t}  (m^0), &
\mathrm{Fl}^{\varepsilon t}_{\langle X_1 \rangle_{\mathcal{O}}}(z^0) \Bigr)  \leq
\mathrm{dist}^{\mathcal{O}} \Bigl( \rho \circ\mathrm{Fl}_{X_\varepsilon}^{t} (m^0),\rho \circ
\mathrm{Fl}_{\widetilde{X}_{\varepsilon,1}}^{t} (m_\varepsilon) \Bigr) \nonumber \\
& + \mathrm{dist}^{\mathcal{O}} \Bigl( \rho \circ\mathrm{Fl}_{\widetilde{X}_{\varepsilon,1}}^{t} (m_\varepsilon),
\rho \circ \mathrm{Fl}_{\widetilde{X}_{\varepsilon,0}}^{t} (m^0) \Bigr),  \label{des_trian}%
\end{align}
where $z^0 = \rho(m^0)$ and  $m_\varepsilon = \Phi_\varepsilon^{-1}(m^0)$ can be though as a parameterized curve depending smoothly on $\varepsilon$.

Since $\Phi_\varepsilon$ is a near identity transformation, there exists a constant $\delta_{0} \in (0, \delta]$
such that $m_\varepsilon  \in N_0$ for all $\varepsilon\in [0, \delta_{0}]$.
\begin{lemma} \label{est_1}
Let $[0,\delta_0] \rightarrow  \mathcal{D}_0$  be a parameterized curve on the orbit space
$\mathcal{O}$ given by $ \varepsilon \mapsto \rho(m_\varepsilon)$. Then, the inequality
\begin{equation*}
\mathrm{dist}^{\mathcal{O}} (\rho(m^0),\rho(m_{\varepsilon})) \leq \mathrm{dist}(m^0, m_{\varepsilon})
\leq \varkappa_0 \varepsilon,
\end{equation*}
holds for $\varepsilon \in [0,\delta_0]$, with
\begin{equation}
\varkappa_0 = \sup_{\substack{m\in N_0}} \| Z(m) \|_{m},  \label{bound1}
\end{equation}
where $Z$ is the vector field given by {\upshape (\ref{vfieldZ})}.
\end{lemma}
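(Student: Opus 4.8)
The plan is to prove the two displayed inequalities separately. The left-hand inequality $\mathrm{dist}^{\mathcal{O}}(\rho(m^0),\rho(m_{\varepsilon})) \leq \mathrm{dist}(m^0,m_{\varepsilon})$ requires no new work: it is exactly the contraction estimate (\ref{DNER}) of Lemma \ref{lemma2}(b) applied to the pair of points $p = m^0$ and $q = m_{\varepsilon}$. Everything therefore reduces to the upstairs bound $\mathrm{dist}(m^0,m_{\varepsilon}) \leq \varkappa_0\varepsilon$.

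For the second inequality, the key observation is that $m_{\varepsilon}$ and $m^0$ sit on one and the same integral curve of $Z$. Since the normalization transformation is the time-$\varepsilon$ flow $\Phi_{\varepsilon} = \mathrm{Fl}_{Z}^{t}|_{t=\varepsilon}$ of (\ref{NI}), we have $m_{\varepsilon} = \Phi_{\varepsilon}^{-1}(m^0) = \mathrm{Fl}_{Z}^{-\varepsilon}(m^0)$. I would then use the explicit connecting curve $\sigma:[0,\varepsilon]\rightarrow M$, $\sigma(u) := \mathrm{Fl}_{Z}^{u}(m_{\varepsilon}) = \mathrm{Fl}_{Z}^{u-\varepsilon}(m^0)$, which joins $\sigma(0)=m_{\varepsilon}$ to $\sigma(\varepsilon)=m^0$ and, being an integral curve of $Z$, satisfies $\dot\sigma(u)=Z(\sigma(u))$. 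Because $\mathrm{dist}(m^0,m_{\varepsilon})$ is the infimum of the lengths of all curves joining the two points, it is dominated by the length of this particular curve,
\[
\mathrm{dist}(m^0,m_{\varepsilon}) \leq L(\sigma) = \int_0^{\varepsilon} \|Z(\sigma(u))\|_{\sigma(u)}\,\mathrm{d}u.
\]

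It then remains to bound the integrand by $\varkappa_0$. For this I would note that the image of $\sigma$ is precisely the set $\{\,m_u : u\in[0,\varepsilon]\,\}$, since $\sigma(u)=\mathrm{Fl}_{Z}^{-(\varepsilon-u)}(m^0)=m_{\varepsilon-u}$. By the choice of $\delta_0\in(0,\delta]$ fixed just before the lemma, $m_u\in N_0$ for every $u\in[0,\delta_0]$, so the whole segment $\sigma([0,\varepsilon])$ lies in $N_0$ whenever $\varepsilon\leq\delta_0$. Consequently $\|Z(\sigma(u))\|_{\sigma(u)} \leq \sup_{m\in N_0}\|Z(m)\|_m = \varkappa_0$ for all $u$, and the integral is at most $\varkappa_0\varepsilon$, which is the desired bound. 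Finiteness of $\varkappa_0$ is guaranteed because $Z$ is smooth and $N_0=\rho^{-1}(\mathcal{D}_0)$ has compact closure, so the continuous function $m\mapsto\|Z(m)\|_m$ attains a finite maximum on $\overline{N_0}$.

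The argument is elementary once the identification $m_{\varepsilon}=\mathrm{Fl}_{Z}^{-\varepsilon}(m^0)$ is in place; the only point requiring care, and the step I would flag as the main (if minor) obstacle, is verifying that the connecting integral curve does not escape $N_0$, so that the supremum defining $\varkappa_0$ genuinely controls $\|Z\|$ along the entire path. This is resolved by the remark that $\sigma$ traces out exactly the points $m_u$, $u\in[0,\varepsilon]$, all of which belong to $N_0$ by the choice of $\delta_0$.
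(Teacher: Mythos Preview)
Your proof is correct and follows essentially the same approach as the paper: both bound $\mathrm{dist}(m^0,m_\varepsilon)$ by the length of the flow curve $\varepsilon'\mapsto m_{\varepsilon'}=\mathrm{Fl}_Z^{-\varepsilon'}(m^0)$ (your $\sigma$ is just this curve reparameterized), use $\dot m_{\varepsilon'}=-Z(m_{\varepsilon'})$ to compute the integrand, and take the supremum over $N_0$. You are in fact more explicit than the paper in justifying why the curve stays inside $N_0$ and in invoking Lemma~\ref{lemma2}(b) for the first inequality.
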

\begin{proof}
Let $L_\varepsilon$ be the arc length of the the parameterized curve $m_\varepsilon = \Phi_\varepsilon^{-1}(m^0)$.
Taking into account that $\frac{\mathrm{d} \, m_{\varepsilon}}{\mathrm{d}\, \varepsilon} = - Z(m_{\varepsilon})$,
we get
\begin{eqnarray*}
L_{\varepsilon} &=& \int_{0}^{\varepsilon} \left\| \frac{\mathrm{d} \,  m_{\varepsilon^{\prime}}}{ \mathrm{d} \,
\varepsilon^{\prime} } \right\| \,
\mathrm{d} \, \varepsilon^{\prime}\\
&=& \int_{0}^{\varepsilon} \| Z(m_{\varepsilon}) \| \mathrm{d} \varepsilon^{\prime} \leq
\varepsilon \sup_{\substack{m\in N_0}} \| Z(m) \|_{m}.
\end{eqnarray*}
Here we use the fact that $\rho$ is a Riemannian submersion and the properties of the distance function.
\end{proof}
\begin{lemma}  \label{lemmdom}
Consider the $(\varepsilon,s)$-dependent vector field $\widetilde{X}_{\varepsilon, s}$ given in
{\upshape(\ref{TPVF})}. Then, there exist $L_{0} > 0$ and $\varepsilon_{0} \in (0,\delta_{0}]$ such that every
trajectory of $\widetilde{X}_{\varepsilon, s}$ through $m_{s\varepsilon}$,
\begin{equation}
t \mapsto \gamma_{\varepsilon, s} (t) := \mathrm{Fl}_{\widetilde{X}_{\varepsilon, s}}^{t} (m_{s\varepsilon})
\in N,  \label{TR1}%
\end{equation}
is defined for $t\in[0,{L_{0}}/{\varepsilon}]$ if $\varepsilon
\in (0,\varepsilon_{0}]$ and $s\in[0,1].$
\end{lemma}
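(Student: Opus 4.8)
The plan is to reduce the claim about remaining in the open set $N\subset M$ to a statement about the projected curve in the base $\mathcal{O}$, exploiting the fact that $N=\rho^{-1}(\mathcal{D})$ is a union of whole fibers. First I would observe that, since $X_0=\omega\,\mathrm{\Upsilon}$ is vertical, it projects trivially: $\mathrm{d}_m\rho\bigl(X_0(m)\bigr)=0$ for every $m$. Writing $\alpha_{\varepsilon,s}(t):=\rho\bigl(\gamma_{\varepsilon,s}(t)\bigr)$ for the projected trajectory and using that $\langle X_1\rangle$ is $\rho$-related to $\langle X_1\rangle_{\mathcal{O}}$, its velocity is
\[
\tfrac{\mathrm{d}}{\mathrm{d}t}\alpha_{\varepsilon,s}(t)=\varepsilon\,\langle X_1\rangle_{\mathcal{O}}\bigl(\alpha_{\varepsilon,s}(t)\bigr)+s\,\varepsilon^2\,\mathrm{d}_{\gamma_{\varepsilon,s}(t)}\rho\bigl(R_\varepsilon(\gamma_{\varepsilon,s}(t))\bigr),
\]
which is of order $\varepsilon$ even though $\gamma_{\varepsilon,s}$ itself winds around the fibers at speed $O(1)$. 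This is the crux: the motion is genuinely slow only after projecting to $\mathcal{O}$.

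Next I would quantify this. Choosing $\mathcal{D}$ so that $\overline{\mathcal{D}_0}\subset\mathcal{D}$, compactness of $\overline N=\rho^{-1}(\overline{\mathcal{D}})$ (a union of circles over a compact base, so compact by properness of $\rho$) gives a finite constant
\[
\tilde C:=\sup\Bigl\{\bigl\|\langle X_1\rangle_{\mathcal{O}}+s\varepsilon\,\mathrm{d}\rho(R_\varepsilon)\bigr\|^{\mathcal{O}}: m\in\overline N,\ s\in[0,1],\ \varepsilon\in[0,\delta_0]\Bigr\},
\]
so that $\|\tfrac{\mathrm{d}}{\mathrm{d}t}\alpha_{\varepsilon,s}\|^{\mathcal{O}}\le\varepsilon\,\tilde C$ as long as $\gamma_{\varepsilon,s}$ remains in $\overline N$. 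Integrating, the projected curve moves at most $\tilde C L_0$ along $[0,L_0/\varepsilon]$. Since $m_{s\varepsilon}\in N_0$ for $\varepsilon\le\delta_0$, the starting point satisfies $\alpha_{\varepsilon,s}(0)=\rho(m_{s\varepsilon})\in\mathcal{D}_0$, and $d_0:=\mathrm{dist}^{\mathcal{O}}(\overline{\mathcal{D}_0},\mathcal{O}\setminus\mathcal{D})>0$ by compactness. Hence it suffices to pick $L_0=\min\{L,\,d_0/(2\tilde C)\}$ and some $\varepsilon_0\in(0,\delta_0]$: then $\tilde C L_0<d_0$ forces $\alpha_{\varepsilon,s}(t)\in\mathcal{D}$, i.e. $\gamma_{\varepsilon,s}(t)\in N$, on the whole interval.

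Finally I would make the phrase ``as long as it remains in $\overline N$'' rigorous by a continuation argument. Let $t^{\ast}$ be the first time $\alpha_{\varepsilon,s}$ exits $\mathcal{D}$ (or the escape time of $\gamma_{\varepsilon,s}$ if it never does). On $[0,t^{\ast})$ the trajectory lies in the compact set $\overline N$, so it cannot blow up in finite time and the velocity bound above applies; the displacement estimate then yields $\mathrm{dist}^{\mathcal{O}}(\rho(m_{s\varepsilon}),\alpha_{\varepsilon,s}(t))<d_0$ for $t\le L_0/\varepsilon$, whence $\alpha_{\varepsilon,s}$ cannot reach $\partial\mathcal{D}$ before time $L_0/\varepsilon$. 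Therefore $t^{\ast}\ge L_0/\varepsilon$, and $\gamma_{\varepsilon,s}$ is defined and stays in $N$ on $[0,L_0/\varepsilon]$, uniformly in $s\in[0,1]$ and $\varepsilon\in(0,\varepsilon_0]$. I expect the main obstacle to be precisely this last packaging: the ambient trajectory has $O(1)$ speed, so no estimate performed on $M$ can control it directly, and one must argue entirely through the slow projected motion while simultaneously ruling out finite-time escape from the (possibly noncompact) manifold — handled here by the properness of $\rho$ over $\overline{\mathcal{D}}$ together with the uniform slow-drift bound.
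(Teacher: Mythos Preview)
Your argument is correct. You exploit the verticality of $X_0$ to show that the projected curve $\alpha_{\varepsilon,s}=\rho\circ\gamma_{\varepsilon,s}$ has speed $O(\varepsilon)$ in $\mathcal{O}$, bound the displacement by $\tilde C\,\varepsilon t$, and close with a standard continuation/escape argument using compactness of $\overline N$. This is sound.

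The paper proceeds differently. Instead of projecting to the base, it factors the flow on $M$ itself:
\[
\mathrm{Fl}_{\widetilde{X}_{\varepsilon,s}}^{t}\;=\;\mathrm{Fl}_{X_0}^{t}\circ\mathrm{Fl}_{P_t}^{\varepsilon t},
\qquad
P_t=\langle X_1\rangle-t\,(\mathcal{L}_{\langle X_1\rangle}\omega)\,\mathrm{\Upsilon}+\varepsilon s\,(\mathrm{Fl}_{X_0}^{t})^{\ast}R_\varepsilon,
\]
so the fast periodic motion is absorbed into the globally defined $\mathrm{Fl}_{X_0}^{t}$ and the question reduces to the existence on a \emph{fixed} interval $[0,L_0]$ of the slow flow $\tau\mapsto\mathrm{Fl}_{P_t}^{\tau}(m_{s\varepsilon})$. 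At $\varepsilon=0$ the field $(P_{0,s})_t$ is $\mathbb{S}^1$-invariant and $\rho$-related to $\langle X_1\rangle_{\mathcal{O}}$, whose trajectory through $z^0$ exists on $[0,L]$ and stays in $\mathcal{D}_0$ by hypothesis; continuous dependence on parameters and initial data then yields the result for all small $\varepsilon$ and all $s\in[0,1]$.

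The practical difference is in the constant $L_0$. The paper's ``rotating frame'' factorization uses the standing hypothesis that the averaged trajectory survives on $[0,L]$ inside $\mathcal{D}_0$, and in principle yields $L_0$ close to $L$. Your displacement bound, by contrast, ignores that hypothesis entirely and produces $L_0\le d_0/(2\tilde C)$, governed only by the gap between $\overline{\mathcal{D}_0}$ and $\partial\mathcal{D}$ and a uniform speed bound; this may be much smaller than $L$. For the lemma as stated either suffices, but for the theorem one ultimately wants $L_0$ as large as possible, which is what the paper's approach is aiming at. Your argument, on the other hand, is more elementary and avoids both the flow-factorization identity and the appeal to continuous dependence on parameters.
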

\begin{proof}
By direct computations, we have
\begin{equation*}
\left.\frac{\mathrm{d}}{\mathrm{dt} }\right|_{t=0}\left(\mathrm{Fl}_{X_{0}}^{t} \circ
\mathrm{Fl}_{P_{t}}^{\varepsilon t}(m) \right) =\widetilde{X}_{\varepsilon,s}(m), \ \ \forall \ m
\in M,
\end{equation*}
where
\[
P_{t} = ({P}_{\varepsilon, s})_{t} = \langle X_{1} \rangle - t (\Li_{\langle X_{1} \rangle} \omega) \mathrm{\Upsilon}
+ \varepsilon s
(\mathrm{Fl}_{X_{0}}^{t})^{\ast} R_{\varepsilon}
\]
is a time-dependent vector field  parameterically depending on $(\varepsilon, s)$ in a smooth way. Hence, we have the following identity
\begin{equation}
\mathrm{Fl}_{\widetilde{X}_{\varepsilon,s}}^{t} = \mathrm{Fl}_{X_{0}}^{t} \circ
\mathrm{Fl}_{P_{t}}^{\varepsilon t}.  \label{EF}
\end{equation}
Since the flow
of $X_{0}$ is periodic, it is enough to show that for small enough $\varepsilon$, there exists a fixed interval
$[0,L_{0}]$ which belongs to the interval of  definition of the trajectory of ${P}_{t}$ through $m_{s\varepsilon}$.
The vector field $({P}_{0,s})_{t} = \langle X_{1} \rangle - t (\Li_{\langle X_{1} \rangle} \omega) \mathrm{\Upsilon}$
is $\mathbb{S}^{1}$-invariant and $\rho$-related with $\langle X_{1} \rangle_{\mathcal{O}}$. Hence, the trajectory
of this field through $m^0$ is defined for $t\in [0,L_{0}]$, and there exists $\varepsilon_{0} \in (0,\delta_{0}]$
such that for every $\varepsilon \in [0, \varepsilon_{0}]$ and $s\in[0,1]$, the trajectory of
$({P}_{\varepsilon, s} )_{t} = ({P}_{0, s})_{t} + \varepsilon  s (\mathrm{Fl}_{X_{0}}^{t})^{\ast} R_{\varepsilon}$
through $m_{s\varepsilon}$ is also defined for all $t \in [0,L_{0}]$. Here we use , the following well known property
(see \cite{abraham}, page 222): If $[0,L_{0}]$ is contained in the domain of definition of the trajectory through $m^0$,
then there exists a neighborhood $U$ of $m^0$ such that any $m \in U$ has a trajectory existing for time
$t\in [0,L_{0}]$.
\end{proof}

Since ${X}_\varepsilon = (\Phi_\varepsilon)_{\ast} \widetilde{X}_{\varepsilon, 1}$, we get
$\mathrm{Fl}_{\widetilde{X}_{\varepsilon,1}}^{t} (m_\varepsilon) =
\Phi_{\varepsilon}^{-1} \circ \mathrm{Fl}_{X_\varepsilon}^{t} (m^0)$. Taking into account that $\rho$ is a
Riemannian submersion, it follows from Lemma \ref{est_1} that
\begin{eqnarray}
\mathrm{dist}^{\mathcal{O}} (\rho \circ \mathrm{Fl}_{X_{\varepsilon}}^{t} (m^0),\rho \circ
\mathrm{Fl}_{\widetilde{X}_{\varepsilon,1}}^{t} (m_{\varepsilon})) & \leq &
\mathrm{dist} (\mathrm{Fl}_{X_{\varepsilon}}^{t} (m^0),\mathrm{Fl}_{\widetilde{X}_{\varepsilon,1}}^{t}
(m_{\varepsilon})) \nonumber\\
& \leq & \mathrm{dist}(\mathrm{Fl}_{X_{\varepsilon}}^{t} (m^0), \Phi_{\varepsilon}^{-1} \circ
\mathrm{Fl}_{X_{\varepsilon}}^{t} (m^0) )  \nonumber\\
& \leq & \varkappa_{0} \, \varepsilon.       \label{1st_est}
\end{eqnarray}
By Lemma \ref{lemmdom}, estimation (\ref{1st_est}) holds for $t \in [0, {L_{0}}/{\varepsilon}]$,
whenever $\varepsilon \in [0,\varepsilon_{0}]$ and $s\in[0,1]$.\\

\noindent\textbf{Step 3} (\textit{Gronwall's estimations.}) In order to get an estimation of first order in
$\varepsilon $ for the second term in (\ref{des_trian}), we proceed as follows: for each fixed $\varepsilon$ we
define the trajectory $\gamma_{\varepsilon} : [0, {L_0}/{\varepsilon}] \rightarrow N$ of the vector field
$\widetilde{X}_{\varepsilon,1} $ through $m_\varepsilon$, that is, $\displaystyle \gamma_{\varepsilon} =
\mathrm{Fl}_{\widetilde{X}_{\varepsilon,1}}^{t}(m_{\varepsilon})$.

Now, let $\alpha_{\varepsilon} = \rho \circ \gamma_{\varepsilon}$ be the projection of $\gamma_{\varepsilon}$ on
the orbit space and $\widetilde{\alpha}_{\varepsilon}$ the horizontal lift of  $\alpha_{\varepsilon}$ through
$m_{\varepsilon}$ Then, by Proposition \ref{proplift}, for every $t$, there exists a fiberwise diffeomorphism
$\varrho^{t}$ on $N$ defined by (\ref{KEY2}), such that $\varrho^{0} = \mathrm{id}$ and
\[
\widetilde{\alpha}_{\varepsilon}(t) = \varrho^{t} (\gamma_{\varepsilon}(t) ).
\]
Moreover, $\widetilde{\alpha}_{\varepsilon}(t)$ is the trajectory of the time dependent vector field
$(\varrho^{t})_{\ast} \widetilde{X}_{\varepsilon,1}^{\mathrm{hor}}$, where
\[
\widetilde{X}_{\varepsilon,1}^{\mathrm{hor}} = \varepsilon \langle X_{1} \rangle^{\mathrm{hor}} + \varepsilon^{2} R_{\varepsilon}^{\mathrm{hor}}.
\]
Since $\varrho^{t}$ is defined as the reparameterized flow of the infinitesimal generator of the
$\mathbb{S}^{1}$-action, we have that
$(\varrho^{t})_{\ast} \langle X_{1} \rangle^{\mathrm{hor} } = \langle X_{1} \rangle^{\mathrm{hor}}$, and
hence,
\[
(\varrho^{t})_{\ast} \widetilde{X}_{\varepsilon,1}^{\mathrm{hor}} = \varepsilon
\langle X_{1} \rangle^{\mathrm{hor}} + \varepsilon^{2} (\varrho^{t})_{\ast} R_{\varepsilon}^{\mathrm{hor}}.
\]
For every $\varepsilon\in [0,\varepsilon_{0}]$ and $ s\in [0,1]$, consider the following horizontal time dependent
vector field on $N$:
\[
{Y}_{t} (\varepsilon,s) = \varepsilon \langle X_{1} \rangle^{\mathrm{hor}} + s \varepsilon^{2}
(\varrho^{t})_{\ast} R_{\varepsilon}^{\mathrm{hor}}.
\]
We define the following parameterized surface in $N$,
\begin{equation}
\Sigma_{\varepsilon} :  \left[ 0, {L_{0}}/ {\varepsilon} \right] \times [0,1] \ni (t,s) \mapsto
\Sigma_{\varepsilon}(t,s) := \mathrm{Fl}_{Y_{t}}^{t} (m_{\varepsilon s}). \label{SURF}%
\end{equation}
It is clear that
\[
\Sigma_{\varepsilon} (t,0) = \mathrm{Fl}_{\langle X_{1}\rangle^{\mathrm{hor}}}^{\varepsilon t} (m^0).
\]
Since $Y_{t}(\varepsilon,1)$ coincides with $(\varrho^{t})_{\ast} \widetilde{X}_{\varepsilon,1}^{\mathrm{hor}}$
we have that $\displaystyle \widetilde{\alpha}_{\varepsilon}(t) = \Sigma_{\varepsilon}(t,1)$. Thus,
$ \alpha_{\varepsilon}(t)=\rho \circ \Sigma_{\varepsilon}(t,1)$ and $\rho \circ \mathrm{Fl}_{\widetilde{X}_{\varepsilon,0}}^{t} (m^0) = \rho \circ \Sigma_{\varepsilon}(t,0)$ . By construction, we have
\begin{equation}
\mathrm{dist}^{\mathcal{O}} \Bigl( \rho \circ\mathrm{Fl}_{\widetilde{X}_{\varepsilon,1}}^{t} (m_\varepsilon),
\rho \circ \mathrm{Fl}_{\widetilde{X}_{\varepsilon,0}}^{t} (m^0) \Bigr) = \mathrm{dist}^{\mathcal{O}}  \Bigl( \rho \circ \Sigma_{\varepsilon}(t,1), \rho \circ \Sigma_{\varepsilon}(t,0) \Bigr)\label{aux1}
\end{equation}
By part (b) of Lemma \ref{lemma2}, we have the estimation
\begin{equation}
\mathrm{dist}^{\mathcal{O}}  \Bigl( \rho \circ \Sigma_{\varepsilon}(t,1), \rho \circ \Sigma_{\varepsilon}(t,0) \Bigr) \leq
\mathrm{dist} \Bigl( \Sigma_{\varepsilon} (t,1), \Sigma_{\varepsilon}(t,0)  \Bigr).\label{aux2}
\end{equation}
Combining (\ref{aux1}) and (\ref{aux2}) we can get an estimation for the second term in (\ref{des_trian}) by studying the lengths of  the $s$-curves in the surface $\Sigma_{\varepsilon}$ (\ref{SURF}).

Now, for a fixed $t$, consider the horizontal $s$-curve $s\mapsto \Sigma_{\varepsilon,t}(s) := \Sigma_{\varepsilon}(t,s)$ and
its arc length
\[
L_{\varepsilon}(t) := \int_{0}^{1} \left\| \frac{\mathrm{d}}{\mathrm{d} \, s} \, \Sigma_{\varepsilon,t}(s) \right\| \mathrm{d} \, s.
\]
\begin{lemma}  \label{lemgron2}
For all $\varepsilon\in [0,\varepsilon_{0}]$ and $t \in [0,{L_{0}}/{\varepsilon}]$, the following estimate holds:
\begin{equation}
L_{\varepsilon}(t) \leq \left[ \left(  \frac{\varkappa_{2}}{\varkappa_{1} } + \varkappa_{0} \right)
\mathrm{e}^{\varepsilon \varkappa_{1}t} - \frac{\varkappa_{2}}{\varkappa_{1}} \right] \varepsilon,  \label{MMN}
\end{equation}
where $\varkappa_{0}$ is given by {\upshape(\ref{bound1})} and
\begin{align}
\varkappa_{1}  & = \sup_{\substack{m\in \overline{N} \\ \varepsilon\in [0,\varepsilon_{0}]}}
\left\| \nabla \langle X_{1} \rangle^{\mathrm{hor}} \right\|_{m} + \varepsilon \left\|
\nabla R_{\varepsilon}^{\mathrm{hor}} \right\|_{m} ,  \label{CAP3} \\
\varkappa_{2} & =\sup_{\substack{m\in\bar{N}\\ \varepsilon \in [0,\varepsilon_{0}]}}
\| R_{\varepsilon}^{\mathrm{hor}} \|_{m} . \label{CAP4}
\end{align}
\end{lemma}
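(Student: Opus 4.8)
The plan is to recognize the parameterized surface $\Sigma_{\varepsilon}$ in (\ref{SURF}) as an instance of the construction underlying Proposition \ref{propgron} and to apply the Gronwall estimate (\ref{Gron1}) to it, the only real work being to identify the relevant constants with $\varkappa_{0},\varkappa_{1},\varkappa_{2}$. For each fixed $s$ the curve $t\mapsto\Sigma_{\varepsilon}(t,s)$ is the integral curve through $m_{\varepsilon s}$ of the horizontal, time-dependent field $Y_{t}(\varepsilon,s)=\varepsilon\langle X_{1}\rangle^{\mathrm{hor}}+s\varepsilon^{2}(\varrho^{t})_{\ast}R_{\varepsilon}^{\mathrm{hor}}$, and the initial curve is $\beta(s)=m_{\varepsilon s}$. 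Although $Y_{t}$ carries an explicit $t$-dependence absent from Proposition \ref{propgron}, its proof applies verbatim here: only the mixed derivative $\nabla_{\partial\gamma/\partial s}(\partial\gamma/\partial t)$ enters, and differentiating in $s$ produces no extra term from the explicit $t$-dependence. I would thus reach the integral inequality
\[
L_{\varepsilon}(t)\leq L_{\varepsilon}(0)+C_{1}\int_{0}^{t}L_{\varepsilon}(\tau)\,\mathrm{d}\tau+C_{2}\,t,
\]
with $C_{1}=\sup\|(\nabla Y_{t})_{m}\|$ and $C_{2}=\sup\|\tfrac{\mathrm{d}}{\mathrm{d}s}Y_{t}\|$, the suprema taken over the image of $\Sigma_{\varepsilon}$ and over $s\in[0,1]$.

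The first bound is on $L_{\varepsilon}(0)$. Since $\Sigma_{\varepsilon}(0,s)=m_{\varepsilon s}$, the quantity $L_{\varepsilon}(0)$ is the length of the curve $s\mapsto m_{\varepsilon s}$; using $\tfrac{\mathrm{d}}{\mathrm{d}s}m_{\varepsilon s}=-\varepsilon Z(m_{\varepsilon s})$ (from the computation in Lemma \ref{est_1}) and the fact that $m_{\varepsilon s}\in N_{0}$, I get $L_{\varepsilon}(0)\leq\varepsilon\varkappa_{0}$ with $\varkappa_{0}$ as in (\ref{bound1}).

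The heart of the argument is matching $C_{1}$ and $C_{2}$ to $\varkappa_{1}$ and $\varkappa_{2}$, and this is exactly where the isometry property of $\varrho^{t}$ is used. For $C_{2}$, note $\tfrac{\mathrm{d}}{\mathrm{d}s}Y_{t}=\varepsilon^{2}(\varrho^{t})_{\ast}R_{\varepsilon}^{\mathrm{hor}}$, so Lemma \ref{lemisom} gives $\|\tfrac{\mathrm{d}}{\mathrm{d}s}Y_{t}\|_{\Sigma_{\varepsilon}(t,s)}=\varepsilon^{2}\|R_{\varepsilon}^{\mathrm{hor}}\|_{(\varrho^{t})^{-1}\Sigma_{\varepsilon}(t,s)}\leq\varepsilon^{2}\varkappa_{2}$, since $(\varrho^{t})^{-1}$ preserves $\overline{N}$; hence $C_{2}\leq\varepsilon^{2}\varkappa_{2}$. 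For $C_{1}$ I would use linearity of $X\mapsto\nabla X$ to write $\nabla Y_{t}=\varepsilon\nabla\langle X_{1}\rangle^{\mathrm{hor}}+s\varepsilon^{2}\nabla\bigl((\varrho^{t})_{\ast}R_{\varepsilon}^{\mathrm{hor}}\bigr)$ and apply Lemma \ref{lemisom} to the last summand, obtaining $\|(\nabla Y_{t})_{m}\|\leq\varepsilon\|\nabla\langle X_{1}\rangle^{\mathrm{hor}}\|_{m}+\varepsilon^{2}\|\nabla R_{\varepsilon}^{\mathrm{hor}}\|_{(\varrho^{t})^{-1}m}$ (using $s\leq1$). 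The subtlety, which I expect to be the main obstacle, is that the two summands are evaluated at the different points $m$ and $(\varrho^{t})^{-1}m$, whereas $\varkappa_{1}$ in (\ref{CAP3}) is a supremum of a \emph{sum at a single point}; bounding each term by its own supremum would only give a larger "sum of suprema". This is resolved by observing that $\langle X_{1}\rangle$ is $\mathbb{S}^{1}$-invariant, hence so is its horizontal part $\langle X_{1}\rangle^{\mathrm{hor}}$, so by Lemma \ref{lemisom} the function $m\mapsto\|\nabla\langle X_{1}\rangle^{\mathrm{hor}}\|_{m}$ is constant along the fibers and its values at $m$ and $(\varrho^{t})^{-1}m$ agree. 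Both terms may then be read off at the common point $(\varrho^{t})^{-1}m\in\overline{N}$, giving $\|(\nabla Y_{t})_{m}\|\leq\varepsilon\bigl(\|\nabla\langle X_{1}\rangle^{\mathrm{hor}}\|_{(\varrho^{t})^{-1}m}+\varepsilon\|\nabla R_{\varepsilon}^{\mathrm{hor}}\|_{(\varrho^{t})^{-1}m}\bigr)\leq\varepsilon\varkappa_{1}$.

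Finally, inserting $L_{\varepsilon}(0)\leq\varkappa_{0}\varepsilon$, $C_{1}\leq\varepsilon\varkappa_{1}$ and $C_{2}\leq\varepsilon^{2}\varkappa_{2}$ into the integral inequality and invoking the scalar Gronwall lemma (\ref{specGron}) with $\delta_{1}=\varepsilon\varkappa_{1}$, $\delta_{2}=\varepsilon^{2}\varkappa_{2}$, $\delta_{3}=\varkappa_{0}\varepsilon$ and $t_{0}=0$ yields
\[
L_{\varepsilon}(t)\leq\Bigl(\tfrac{\varepsilon^{2}\varkappa_{2}}{\varepsilon\varkappa_{1}}+\varkappa_{0}\varepsilon\Bigr)\mathrm{e}^{\varepsilon\varkappa_{1}t}-\tfrac{\varepsilon^{2}\varkappa_{2}}{\varepsilon\varkappa_{1}},
\]
which simplifies to (\ref{MMN}). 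The validity of the time interval $[0,L_{0}/\varepsilon]$ and the containment of the whole surface in $\overline{N}$, needed for the suprema to be finite, are already guaranteed by Lemma \ref{lemmdom}; so apart from the constant-matching step the remaining work is routine bookkeeping.
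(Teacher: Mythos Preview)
Your proposal is correct and follows essentially the same route as the paper: both apply the basic inequality (\ref{long}) from the proof of Proposition \ref{propgron} to the surface $\Sigma_{\varepsilon}$, split $\nabla_{\partial\Sigma_{\varepsilon}/\partial s}(\partial\Sigma_{\varepsilon}/\partial t)$ using the decomposition of $Y_{t}$, invoke Lemma \ref{lemisom} for the $(\varrho^{t})_{\ast}R_{\varepsilon}^{\mathrm{hor}}$ terms, and finish with the scalar Gronwall lemma. Your observation that the explicit $t$-dependence of $Y_{t}$ is harmless (only $\partial/\partial s$ is taken) is exactly what the paper uses implicitly when it writes the bound for $\nabla_{\partial\Sigma_{\varepsilon}/\partial s}(\partial\Sigma_{\varepsilon}/\partial t)$ directly.

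The one point where you are more careful than the paper is the constant-matching for $\varkappa_{1}$: the paper ends up with $\|\nabla\langle X_{1}\rangle^{\mathrm{hor}}\|$ evaluated at $\Sigma_{\varepsilon}$ and $\|\nabla R_{\varepsilon}^{\mathrm{hor}}\|$ at $\varrho^{-t}\circ\Sigma_{\varepsilon}$, then bounds the sum by $\varkappa_{1}$ without comment. Your remark that $\langle X_{1}\rangle^{\mathrm{hor}}$ is $\mathbb{S}^{1}$-invariant (so $\|\nabla\langle X_{1}\rangle^{\mathrm{hor}}\|$ is constant along fibers by Lemma \ref{lemisom}) is precisely what is needed to evaluate both summands at the same point and recover the supremum of the sum as defined in (\ref{CAP3}); this closes a small gap the paper leaves implicit.
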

\begin{proof}
Applying the basic inequality (\ref{long}), we have
\begin{equation}
L_{\varepsilon}(t) \leq  L_{\varepsilon}(0) + \int_{0}^{t} \int_{0}^{1}
\left\| \nabla_{\frac{\partial \Sigma_{\varepsilon}}{\partial s}}
\frac{\partial \Sigma_{\varepsilon}}{\partial t^{\prime}} \right\|_{\Sigma_{\varepsilon}} \mathrm{d} s \,
\mathrm{d}  t^{\prime}.\label{BAS}
\end{equation}
By definition, the $t$-curves in $\Sigma_{\varepsilon}$ are horizontal and
\[
\frac{\partial}{\partial t} \Sigma_{\varepsilon}(t,s) =  \left(\varepsilon \langle X_{1} \rangle^{\mathrm{hor}} + s
\varepsilon^{2} (\varrho^{t})_{\ast} R_{\varepsilon}^{\mathrm{hor}} \right)  \circ \Sigma_{\varepsilon}.
\]
It follows that
\begin{align*}
\left\| \nabla_{\frac{\partial \Sigma_{\varepsilon}}{\partial s}}
\frac{\partial \Sigma_{\varepsilon}}{\partial t^{\prime}} \right\|_{\Sigma_{\varepsilon}}  & \leq
\varepsilon \left\| \nabla \langle X_{1} \rangle^{\mathrm{hor}} \right\|_{\Sigma_{\varepsilon}} \cdot
\left\| \frac{\partial \Sigma_{\varepsilon}}{\partial s} \right\|_{\Sigma_{\varepsilon}} \\
& + s \varepsilon^{2}   \left\| \nabla_{\frac{\partial \Sigma_{\varepsilon}}{\partial s}}
\left(  (\varrho^{t})_{\ast} R_{\varepsilon}^{\mathrm{hor}} \right)
\right\|_{\Sigma_{\varepsilon}} + \varepsilon^{2}  \|  (\varrho^{t})_{\ast}
R_{\varepsilon}^{\mathrm{hor}} \|_{\Sigma_{\varepsilon}}.
\end{align*}
By Lemma \ref{lemisom}, we deduce
\begin{align*}
\left \| \nabla_{\frac{\partial \Sigma_{\varepsilon}}{\partial s} } \left(  (\varrho^{t})_{\ast}
R_{\varepsilon}^{\mathrm{hor}} \right) \right\|_{\Sigma_{\varepsilon}}
& \leq \, \left\| \nabla \left(  (\varrho^{t})_{\ast} R_{\varepsilon}^{\mathrm{hor}} \right)
\right\|_{\Sigma_{\varepsilon}} \cdot \left\| \frac{\partial}{\partial s}   \Sigma_{\varepsilon}
\right\|_{\Sigma_{\varepsilon}} \\
&  = \, \left\| \nabla R_{\varepsilon}^{\mathrm{hor}}  \right\|_{\varrho^{-t} \circ \Sigma_{\varepsilon}}
\cdot \left\|  \frac{\partial}{\partial s} \Sigma_{\varepsilon} \right\|_{\Sigma_{\varepsilon}}
\end{align*}
and
\[
\| (\varrho^{t})_{\ast} R_{\varepsilon}^{\mathrm{hor}} \|_{\Sigma_{\varepsilon}} =
\| R_{\varepsilon}^{\mathrm{hor}} \|_{\varrho^{-t} \circ \Sigma_{\varepsilon}}.
\]
Putting these relations into (\ref{BAS}) we arrive at the inequality%
\[
L_{\varepsilon}(t) \leq \varepsilon \varkappa_{0} + \varepsilon \varkappa_{1}
\int_{0}^{t} L_{\varepsilon}(t^{\prime}) \mathrm{d} t^{\prime} + \varepsilon^{2} \varkappa_{2} t,
\]
and, by applying the specific Gronwall's lemma, we get (\ref{MMN}).
\end{proof}

Finally,  we need to prove that estimation (\ref{CCC}) holds for all
$\varepsilon \in (0, \varepsilon_0]$ and $t \in [0, L_0/\varepsilon].$ This fact concludes the proof of Theorem \ref{aver_theo}. From
estimation (\ref{1st_est}), we have
\begin{equation}
\mathrm{dist}^{\mathcal{O}} (\rho \circ \mathrm{Fl}_{X_{\varepsilon}}^{t} (m^0),\rho \circ
\mathrm{Fl}_{\widetilde{X}_{\varepsilon,1}}^{t} (m_{\varepsilon}))  \leq  \varkappa_{0} \, \varepsilon.\label{aux3}
\end{equation}
By equations (\ref{aux1}), (\ref{aux2}), the inequality $ \mathrm{dist} \Bigl( \Sigma_{\varepsilon} (t,1) ,\Sigma_{\varepsilon}(t,0) \Bigr) \leq L_{\varepsilon} (t)$ and Lemma \ref{lemgron2},
we have
\begin{equation}
\mathrm{dist}^{\mathcal{O}}  \Bigl(\rho \circ \mathrm{Fl}_{\widetilde{X}_{\varepsilon,0}}^{t} (m^0) \Bigr) \leq \left[ \left(  \frac{\varkappa_{2}}{\varkappa_{1} } + \varkappa_{0} \right)
\mathrm{e}^{\varepsilon \varkappa_{1}t} - \frac{\varkappa_{2}}{\varkappa_{1}} \right] \varepsilon.\label{aux4}
\end{equation}
Therefore, the desired result follows from triangle inequality (\ref{des_trian}) and
inequalities (\ref{aux3}) and (\ref{aux4}).

\begin{corollary} \label{cor_cons}
The $\varepsilon$-independent constant in {\upshape (\ref{CCC})} can be chosen as follows:
\begin{equation}
c = \varkappa_{0} + \left(  \frac{\varkappa_{2}}{\varkappa_{1}} + \varkappa_{0} \right)
\mathrm{e}^{\varkappa_{1} T_{0}} - \frac{\varkappa_{2}}{\varkappa_{1}}, \label{valcons}
\end{equation}
where the constants $\varkappa_{0}$, $\varkappa_{1}$, and $\varkappa_{2}$ are given by
{\upshape (\ref{bound1}), (\ref{CAP3})} and {\upshape(\ref{CAP4})}, respectively.
\end{corollary}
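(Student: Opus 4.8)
The plan is to collect the two first-order estimates already obtained in the course of proving Theorem \ref{aver_theo} and to render the resulting exponential factor independent of $t$. Recall that the triangle inequality (\ref{des_trian}) splits the quantity on the left-hand side of (\ref{CCC}) into two pieces: the first is controlled by (\ref{aux3}), giving the bound $\varkappa_{0}\varepsilon$, while the second is controlled by (\ref{aux4}) together with Lemma \ref{lemgron2}, giving $\left[\left(\frac{\varkappa_{2}}{\varkappa_{1}}+\varkappa_{0}\right)\mathrm{e}^{\varepsilon\varkappa_{1}t}-\frac{\varkappa_{2}}{\varkappa_{1}}\right]\varepsilon$. Adding these two contributions yields
\[
\mathrm{dist}^{\mathcal{O}}\Bigl(\rho\circ\mathrm{Fl}_{X_{\varepsilon}}^{t}(m^{0}),\mathrm{Fl}_{\langle X_{1}\rangle_{\mathcal{O}}}^{\varepsilon t}(z^{0})\Bigr)\leq\left[\varkappa_{0}+\left(\frac{\varkappa_{2}}{\varkappa_{1}}+\varkappa_{0}\right)\mathrm{e}^{\varepsilon\varkappa_{1}t}-\frac{\varkappa_{2}}{\varkappa_{1}}\right]\varepsilon,
\]
valid for all $\varepsilon\in(0,\varepsilon_{0}]$ and $t\in[0,L_{0}/\varepsilon]$.

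First I would observe that the bracketed factor is already of the form $c(\varepsilon t)$, so the only remaining task is to eliminate the dependence on $t$. Since $\varkappa_{1}>0$ (which is implicit in the Gronwall form (\ref{MMN}), as it divides by $\varkappa_{1}$) and the coefficient $\frac{\varkappa_{2}}{\varkappa_{1}}+\varkappa_{0}$ is nonnegative, the map $u\mapsto\left(\frac{\varkappa_{2}}{\varkappa_{1}}+\varkappa_{0}\right)\mathrm{e}^{\varkappa_{1}u}-\frac{\varkappa_{2}}{\varkappa_{1}}$ is monotonically increasing in $u$. On the time interval under consideration we have the a priori bound $\varepsilon t\leq L_{0}=:T_{0}$, so the exponential attains its largest value at $\varepsilon t=T_{0}$. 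Replacing $\mathrm{e}^{\varepsilon\varkappa_{1}t}$ by $\mathrm{e}^{\varkappa_{1}T_{0}}$ then produces precisely the $\varepsilon$-independent constant $c$ displayed in (\ref{valcons}), and factoring out $\varepsilon$ recovers the estimate (\ref{CCC}).

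This argument is essentially a bookkeeping step, and I expect no serious obstacle; the only point requiring care is the monotonicity of the exponential factor, which rests on the sign conditions $\varkappa_{1}>0$ and $\frac{\varkappa_{2}}{\varkappa_{1}}+\varkappa_{0}\geq 0$, both guaranteed by the construction of $\varkappa_{0},\varkappa_{1},\varkappa_{2}$ as suprema of nonnegative norms over the compact closure $\overline{N}$ in (\ref{bound1}), (\ref{CAP3}) and (\ref{CAP4}). I would note in passing that the resulting $c$ depends only on the Riemannian geometry of $M$ through the operator norms $\|\nabla\langle X_{1}\rangle^{\mathrm{hor}}\|$ and $\|\nabla R_{\varepsilon}^{\mathrm{hor}}\|$, on the $\mathbb{S}^{1}$-action through the averaged field $\langle X_{1}\rangle$ and the fiberwise maps $\varrho^{t}$, and on the choice of domain through the suprema over $\overline{N}$ and the time bound $T_{0}$, exactly as asserted in the remark following the theorem.
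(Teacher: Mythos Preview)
Your proposal is correct and matches the paper's approach: the corollary is stated without a separate proof because it is precisely the bookkeeping step you describe, namely adding the bounds (\ref{aux3}) and (\ref{aux4}) via the triangle inequality (\ref{des_trian}) and then replacing $\varepsilon t$ by its maximum $L_{0}=T_{0}$ to make the exponential factor $\varepsilon$-independent. Your identification $T_{0}:=L_{0}$ and the monotonicity argument are exactly what is implicitly used.
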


\begin{remark} \label{remest}
Taking into account that $\displaystyle \varkappa_0 = \sup_{\substack{m\in N_0}} \| Z(m) \|_{m}$ (Lemma \ref{est_1})  where $Z$ is the vector field given by
\[
Z  = \frac{1}{\omega} X_{1}  + \frac{1}{\omega^{3}}
\mathcal{S}^{2}(\mathcal{L}_{\langle X_{1} \rangle} \omega)   X_{0} ,
\]
$\varkappa_{0}$ is the unique constant of equation (\ref{valcons}) that can be expressed only in terms of vector fields $X_{0}$ and $X_{1}$.
\end{remark}

\section{Application of the averaging theorem to adiabatic invariants}
\label{sec:8}

Here, we present an application of Theorem \ref{aver_theo} in the context of adiabatic invariants which
appear in many important problems of mathematical-physics \cite{Ne-08}. \\

\noindent\textbf{Adiabatic invariants}. An adiabatic invariant of a perturbed vector field is a function which
changes very little along the trajectories of the vector field over a long period of time. More precisely,
let $M$ be a smooth manifold and let $X_0$ be a complete vector field on $M$. A function $I \in C^{\infty}(M)$ is called
an adiabatic invariant of the perturbed vector field $X_{\varepsilon} = X_0 + \varepsilon X_{1}$, if there exists a
constant $c$ such that for every $x\in M$ and $\varepsilon > 0$, the following inequality holds,
\begin{equation*}
| I \circ\mathrm{Fl}_{X_{\varepsilon}}^{t} (x) - I (x) | \leq c \varepsilon, \qquad
\text{for} \; 0 \leq t \leq \frac{1}{\varepsilon}.
\end{equation*}
Now we prove a result that states the conditions for the existence of an adiabatic invariant of a perturbed vector
field $X_\varepsilon = X_0 + \varepsilon X_1$ which is $\varepsilon$-close to a vector field with periodic flow.
Since this proposition relies on Theorem \ref{aver_theo}, the $\mathbb{S}^1$-action induced by $X_0$ must be
free and $M$ a connected manifold.
\begin{proposition}\label{adinv}
Assume also that the reduced averaged vector field $\langle X_{1}\rangle_{\mathcal{O}}$ on the orbit space
$\mathcal{O}= M \diagup \mathbb{S}^{1}$ satisfies the hypothesis of Theorem \ref{aver_theo}  and admits a
smooth first integral $J_{\mathcal{O}}:\mathcal{O}\rightarrow\mathbb{R}$,
\begin{equation}
\mathcal{L}_{\langle X_{1}\rangle_{\mathcal{O}}}J_{\mathcal{O}}=0.\label{condadia}
\end{equation}
Then, the function $J:=J_{\mathcal{O}}\circ\rho$ is an adiabatic invariant for $X_{\varepsilon}$:
\[
| J\circ\mathrm{Fl}_{X_{\varepsilon}}^{t} ( m^0 ) - J (m^0)| = O (\varepsilon),
\]
for $m^0 \in\mathcal{D}_{0}$ , $\varepsilon$ small enough  and $t \in [0,{T_{0}}/{\varepsilon}]$.
\end{proposition}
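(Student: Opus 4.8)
The plan is to reduce the statement to the distance estimate \eqref{CCC} of Theorem \ref{aver_theo}, exploiting that $J_\mathcal{O}$ is constant along the averaged flow. First I would unfold $J=J_\mathcal{O}\circ\rho$ and, writing $z^0=\rho(m^0)$, insert the averaged trajectory as a pivot point:
\[
J\circ\mathrm{Fl}_{X_\varepsilon}^{t}(m^0)-J(m^0)
= J_\mathcal{O}\bigl(\rho\circ\mathrm{Fl}_{X_\varepsilon}^{t}(m^0)\bigr)
- J_\mathcal{O}\bigl(\mathrm{Fl}_{\langle X_1\rangle_\mathcal{O}}^{\varepsilon t}(z^0)\bigr).
\]
Replacing $J(m^0)=J_\mathcal{O}(z^0)$ by $J_\mathcal{O}\bigl(\mathrm{Fl}_{\langle X_1\rangle_\mathcal{O}}^{\varepsilon t}(z^0)\bigr)$ is legitimate precisely because hypothesis \eqref{condadia} makes $J_\mathcal{O}$ invariant along every integral curve of $\langle X_1\rangle_\mathcal{O}$, so its value at $\mathrm{Fl}_{\langle X_1\rangle_\mathcal{O}}^{\varepsilon t}(z^0)$ equals its value at $z^0$ for all $t$. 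This rewrites the quantity to be bounded as the increment of the single smooth function $J_\mathcal{O}$ between the projected perturbed point and the averaged point, whose $\mathrm{dist}^\mathcal{O}$-separation is exactly what the averaging theorem controls.

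Then I would estimate this increment by a Lipschitz bound. The averaged trajectory stays in $\mathcal{D}_0$ of compact closure for $\varepsilon t\in[0,L_0]$, and by Theorem \ref{aver_theo} the projected perturbed trajectory remains within $c\varepsilon$ of it; thus, for $\varepsilon_0$ small, both curves project into a fixed compact set $K\subset\mathcal{O}$ (the closure of the enlarged domain $\mathcal{D}$ used in the proof of Theorem \ref{aver_theo}). On $K$ the smooth function $J_\mathcal{O}$ is Lipschitz for $\mathrm{dist}^\mathcal{O}$, with constant $\Lambda:=\sup_{z\in K}\|\mathrm{d}_z J_\mathcal{O}\|$, so combining with \eqref{CCC} yields
\[
\bigl|J\circ\mathrm{Fl}_{X_\varepsilon}^{t}(m^0)-J(m^0)\bigr|
\le \Lambda\,\mathrm{dist}^\mathcal{O}\Bigl(\rho\circ\mathrm{Fl}_{X_\varepsilon}^{t}(m^0),\ \mathrm{Fl}_{\langle X_1\rangle_\mathcal{O}}^{\varepsilon t}(z^0)\Bigr)
\le \Lambda\,c\,\varepsilon
\]
for all $\varepsilon\in(0,\varepsilon_0]$ and $t\in[0,L_0/\varepsilon]$, which is the asserted $O(\varepsilon)$ bound on the window $[0,T_0/\varepsilon]$ with $T_0=L_0$.

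The hard part is the containment underpinning the Lipschitz step: one must keep the projected perturbed curve inside a single compact $K$ on which $J_\mathcal{O}$ and $\mathrm{d}J_\mathcal{O}$ are bounded, uniformly over the whole long window $t\in[0,L_0/\varepsilon]$. This is where the uniform-in-$t$ nature of \eqref{CCC} is essential, and it is already secured by the construction in the proof of Theorem \ref{aver_theo}, which confines the relevant trajectories to $N=\rho^{-1}(\mathcal{D})$ with $\overline{\mathcal{D}}$ compact. A minor technical wrinkle is that the Lipschitz estimate $|J_\mathcal{O}(p)-J_\mathcal{O}(q)|\le\Lambda\,\mathrm{dist}^\mathcal{O}(p,q)$ requires joining $p,q$ by short paths inside $K$; since the two points are only $c\varepsilon$ apart, for small $\varepsilon$ one obtains this by integrating $\mathrm{d}J_\mathcal{O}$ along a near-minimizing path contained in $K$ and letting the excess length tend to zero. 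Everything else is direct substitution, so this containment/Lipschitz argument is essentially the only obstacle.
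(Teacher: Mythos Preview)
Your proposal is correct and follows essentially the same route as the paper: rewrite $J(m^0)$ as $J_{\mathcal{O}}\bigl(\mathrm{Fl}_{\langle X_1\rangle_\mathcal{O}}^{\varepsilon t}(z^0)\bigr)$ using the first-integral hypothesis \eqref{condadia}, then apply a Lipschitz bound for $J_{\mathcal{O}}$ on a compact set together with the distance estimate \eqref{CCC} from Theorem \ref{aver_theo}. Your treatment of the containment issue (ensuring both points lie in a fixed compact $K\subset\mathcal{O}$ for the Lipschitz step) is in fact more explicit than the paper's, which simply invokes the Lipschitz property of $J_{\mathcal{O}}$ on $\overline{\mathcal{D}}$ without further comment.
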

\begin{proof}
Since the closure of the open domain $\mathcal{D}$ is compact, the function $J_{\mathcal{O}}$  has the Lipschitz property on $\overline{\mathcal{D}}$ (see,
for example, \cite{abraham,verhulst3}),
\[
\left| J_{\mathcal{O}}(z) - J_{\mathcal{O}}(y) \right| \leq \lambda_{J}\left\| z - y \right\|_{\mathcal{O}}.
\]
Then, by condition (\ref{condadia}) and Theorem \ref{aver_theo} we have
\begin{align*}
&  | J \circ \mathrm{Fl}_{X_{\varepsilon}}^{t} (m^0) - J (m^0) | \\
&  = | J_{\mathcal{O}}(\rho \circ \mathrm{Fl}_{\mathbf{X}_{\varepsilon}}^{t} (m^0)) -
J_{\mathcal{O}} (\mathrm{Fl}_{\langle X_{1}\rangle_{\mathcal{O}}}^{\varepsilon t} (\rho(m^0)) | \\
&  \leq \lambda_{J} \left\|  \rho\circ \mathrm{Fl}_{X_{\varepsilon}}^{t} (m^0) -
\mathrm{Fl}_{\langle X_{1}\rangle_{\mathcal{O}}}^{\varepsilon t} (\rho(m^0)) \right\| _{\mathcal{O}}\\
&  \leq\lambda_{J} c\varepsilon,
\end{align*}
where the constant $c$ is given by (\ref{valcons}).
\end{proof}

This result is well known for perturbed Hamiltonian vector fields, where the unperturbed part is a one degree of freedom Hamiltonian system, see \cite{ArKN-88,Ne-08}, and the proof in this case relies on the classical averaging theorem and the existence of action-angle variables, (coordinate approach). A study of the existence of adiabatic invariant for perturbed vector fields with a free coordinate approach can be found in \cite{AvenVallVor-13}.

\begin{acknowledgements}
The authors are grateful to professor Yu. M. Vorobev for fruitful discussions on the preparation of this paper. They also would
like to thank the referees for their useful suggestions that helped us to improve this work. This research was partially supported by the
National Council of Science and Technology (CONACyT) under the Grant 219631, CB-2013-01.
\end{acknowledgements}

\end{document}